\documentclass{article}

\usepackage{arxiv}

\usepackage[utf8]{inputenc} 
\usepackage[T1]{fontenc}    
\usepackage{hyperref}       
\usepackage{url}            
\usepackage{booktabs}       
\usepackage{amsfonts}       
\usepackage{nicefrac}       
\usepackage{microtype}      
\usepackage{lipsum}
\usepackage{graphicx}
\graphicspath{ {./images/} }

\usepackage[numbers,sort&compress]{natbib}

\usepackage[utf8]{inputenc}
\usepackage[T1]{fontenc}
\usepackage{graphicx}          

\usepackage{graphicx}
\usepackage{dcolumn}

\usepackage{amsmath}
\usepackage{amsthm}
\usepackage{dsfont}
\usepackage{color}
\usepackage{comment}
\usepackage{marginnote}
\usepackage{tabularx}
\usepackage{ragged2e}
\usepackage{placeins}

\newtheorem{theorem}{Theorem}

\newtheorem{lemma}{Lemma}

\title{Node-Wise Dynamic Optimal Control for Evolutionary Games on General Multilayer Networks}

\author{
 Rio Aurachman \\
  School of Industrial Engineering\\
  Telkom University, Indonesia\\
  School of Electrical and Electronic Engineering\\
  University of Sheffield\\
  Sheffield, United Kingdom \\\texttt{rioaurachman@telkomuniversity.ac.id, raurachman1@sheffield.ac.uk} \\
   \And
 Giuliano Punzo \\
  School of Electrical and Electronic Engineering\\
  University of Sheffield\\
  Sheffield, United Kingdom \\
  \texttt{g.punzo@sheffield.ac.uk} \\
}

\begin{document}
\maketitle
\begin{abstract}
Promoting cooperative behaviour amongst decision makers has key implications for the long term sustainability of social systems. Incentives can promote cooperation in situations where defection is more favourable. Previous research has identified optimal decentralised incentives in structured populations where the social environment is described by a generic single-layer networks. Optimality is here intended in the sense of cost minimisation. Here, we provide and optimal incentive strategy for structured populations in general network, a rather unexplored case.  Further, we look at populations where, through the network structure, each player interact with one set of neighbours while contributing to an opinion diffusion dynamics from a second set of neighbours. Hence we cover the case of multilayer networks. To fill this gap, we provide a solution to the optimal control problem by solving the Hamilton-Jacobi-Bellman equation and derive an analytic solution for distributing incentives in multilayer networks. By implementing the dynamics of the networked Prisoner's Dilemma Game, we provide a feedback control loop that yields an optimal incentive distribution over time. The dynamic incentive depends on the current state of cooperation, the game-layer network, the strategy-diffusion layer, and the payoff matrix design. We found that the optimal incentive is node-wise, unique to each node on the network, influenced by its relative position according to the adjacency matrices of both layers. We also found that the optimal solution is not exclusively reward or punishment. While one node can receive a reward, the other node may receive punishment at the same time, depending on its current level of cooperation, relative to other nodes. We provide analytic solutions and numerical validations for the cases studied, comparing our results to the existing literature. 
\end{abstract}


\section{Introduction}
In socio-technical systems, the performance of the technical subsystem depends on the behaviour of the social subsystem \cite{Ropohl1999}. One important aspect that has been researched and explored in human societies is cooperative behaviour, which has been declared one of the grand scientific challenges of the 21st century \cite{Perc2017}. The dynamics of the cooperation can be modelled using an evolutionary game on a network \cite{Ohtsuki2006b} \cite{Zhu2021}. Besides the Stackelberg game and mechanism design \cite{zhu2025revisiting}, public goods games are a type of game that can be used to understand the complexity of cooperative behaviour within sociotechnical systems, for example, network security \cite{Das2022}.

Public goods arise from the cumulative contributions of individuals and are shared by all members of the group, with the understanding that the benefit cannot be achieved individually \cite{Du2022}. Since contributing incurs an individual cost while the benefits are non-excludable, individuals may be incentivised to free-ride by withholding contributions while still benefiting from the public good. As free-riding spreads through strategic adaptation, cooperation progressively declines, potentially leading to the collapse of the public good \cite{nunn1978public}.

The evolutionary Prisoner's Dilemma on networks provides a suitable mathematical framework for analysing the emergence of social dilemmas, similar to those observed in public goods games. Defection is the individually rational strategy regardless of the opponent’s action. As rational agents adopt the same strategy, the population may converge to mutual defection, resulting in a suboptimal outcome for all despite mutual cooperation yielding higher collective payoffs.

One approach to mitigate the tendency toward widespread defection and promote cooperation is to introduce incentives, such as rewards for cooperators, punishment for defectors \cite{Szolnoki2010}, or exclusion for free-riders \cite{Liu2018}. Incentives are usually introduced by the regulator (a third party) to promote cooperation \cite{zu2022reward}, which is always costly and may thus fail to be sustained \cite{zhou2022costly}. It is necessary to formulate an effective, optimal incentive strategy that reflects the player's performance to minimise total cost \cite{Han2018}. One way to design the incentive is to formulate an optimal control problem for decentralised reward or punishment in a networked public good game \cite{wang2022decentralized} and a Prisoner's Dilemma \cite{wang2023optimization}.The optimal strategy reflect to the performance of the player. In a Prisoner's Dilemma, an incentive is implemented as an additional payoff on top of the basic payoff, serving as a reward for the player who chooses to cooperate \cite{wang2025optimally}. Another way to control cooperation throughout the network is to introduce cooperative zealots, thereby giving surrounding agents an incentive to change their behaviour to also cooperate \cite{Riehl2017}. One study is a simulation in which the reward is provided by another player as a rewarding cooperator (RC) \cite{Szolnoki2010}.

The network structures considered in previous studies vary. Many works have focused on single layer regular lattice networks \cite{wang2022decentralized}\cite{wang2023optimization}\cite{wang2025optimally}\cite{Szolnoki2010}, while others have considered complete networks \cite{wang2025optimally}\cite{Riehl2017} and star networks \cite{Riehl2017}. However, only a limited number of recent studies have formulated optimal control strategies for general networks \cite{Riehl2017}, by relying primarily on algorithmic rather than analytical solutions. In many real-world systems, interaction networks do not conform to generic topologies such as regular lattices, complete graphs, Erdős–Rényi graphs, or other canonical random network models. Instead, they often exhibit application-specific structures with unique characteristics. Besides the design of optimal incentive mechanisms for general networks remains an open problem, the corresponding problem when the same player is influenced by different contacts or neighbours is also still unresolved. This would be immediate to model through a multilayer setting where  game interactions and strategy dynamics are mediated by distinct graph structures. 

As for modelling the strategy set, many recent studies adopt a pure strategy framework, in which players choose either full defection (0) or full cooperation (1) \cite{wang2025optimally,Szolnoki2010, Riehl2017}. However, cooperation may vary continuously between these two extreme values. Under a continuous strategy framework, the design of reward and punishment mechanisms becomes substantially more complex, as incentives may depend not only on an individual’s current strategy but also on the player’s network position, the strategies and cooperation levels of neighbouring players, and the incentive distributions across the network. Addressing this challenge requires a new modelling framework.

This work benefits and extends recent studies by addressing several key limitations. Using the Hamilton-Jacobi-Bellman framework \cite{wang2022decentralized}, we develop an analytical approach to determine the optimal control of an evolutionary Prisoner's Dilemma with continuous strategies on a general multilayer network. Optimal control theory enables the design of a dynamic incentive mechanism that minimises cumulative cost over time. Consistent with previous studies, incentives are determined adaptively based on the current strategy profile. However, the continuous strategy framework allows for a more refined and selective allocation of incentives than binary models. 

The general network structure yields node-specific incentive dynamics, while the multilayer setting provides insight into how a player’s structural position across layers influences incentive allocation. A key finding is that, depending on the system state and relative cooperation levels among neighbouring players, a node may receive either rewards or penalties along the system trajectory.

\section{Model and Framework}
\subsection{Network}
In this study, cooperation is treated as a dynamic variable that varies uniquely among nodes within a network. We introduce a multilayer network as a pair
\begin{equation}\label{genericnetwork}
\mathcal{M}=(\mathcal{G},\mathcal{E}),
\end{equation}
where $\mathcal{G}=\{\mathbb{G}\}$ is a tuple of graphs $\mathcal{G}=(G^{(1)},G^{(2)},...G^{(g)})$ and $\mathcal{E}$ is a tuple of edge sets connecting nodes between elements $\mathcal{G}$, that is $\mathcal{E}=(\mathcal{E}^{12},\mathcal{E}^{23},\mathcal{E}^{13},...\mathcal{E}^{C(g,2)})$. 

We shall assume that each node is connected to at least another node in every layer, and every graph on each layer has its own vertices and edges. Here, $G=(V,E)$ represents a graph where $V=\{1,2,\ldots,N\}$ denotes the set of nodes and $E \subseteq V \times V$ represents the set of ordered node pairs (edges).

The relationships between nodes on each layer are represented using an adjacency matrix. This $N \times N$ matrix, where $N$ is the population size, indicates whether interactions exist between pairs of nodes. Specifically, the layer $g$ graph's adjacency matrix $A^g=a^g_{ij}$ is defined such that $a^g_{ij}=1$ if there is an edge between nodes $i$ and $j$ in layer $g$ (i.e., $(i,j) \in E^g$), and $a_{ij}=0$ otherwise.  

We define the network in this study as a two-layer undirected network. So the $\mathcal{E}=\mathcal{E}_{12}$ is a matrix with size $N^1 \times N^2$, where $N^1$ is the number of nodes in layer $1$ and $N^2$ is the number of nodes in the second layer. The matrix value $e^{12}_{ij}$ is $1$ if there is a connection and $0$ of there is no connection.

Moreover, in this study, we use a multiplex setting for the two-layer networks. In both layers, the node sets are identical while the edge sets, in general, differ. This implies that the networks can be defined by different adjacency matrices with the same dimensions. It follows that $N^1=N^2$ and, if nodes $i$ and $j$ belong to different  layers, $e_{ij} =1$ if $i=j$. Likewise, in the same layer, self-loops are included, hence $a_{ii}=1$ for all nodes $i$ in the same layer. The undirected nature of the graph makes $a^g_{ij}=a^g_{ji}$. The illustration of a single-layer and a multiplex network can be seen in Figure \ref{fig:graphplot}.

\begin{figure}
\centering
	\includegraphics[width=1\textwidth]{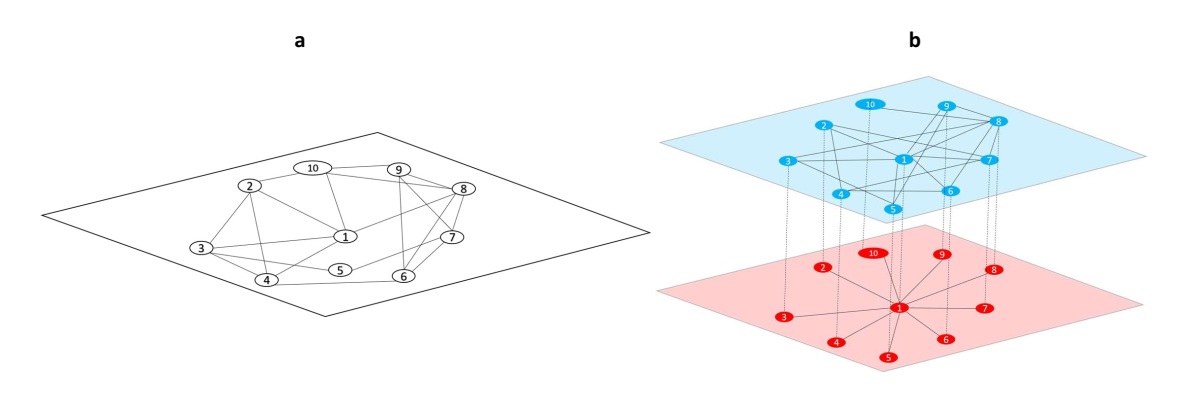}
	\caption[Network Plot]{(a) Single-layer Small-World network. (b) Multilayer network with an Erdős-Rényi network as the first layer (top) and a Star network as the second layer (bottom).}
	\label{fig:graphplot}
\end{figure}

\subsection{Game}
The game considered in this study is a matrix game with the standard payoff structure.
\begin{equation}\label{payoffmatrix}
        \mathcal{P}=\begin{bmatrix}
                     P & Q\\
                     R & S\\
        \end{bmatrix}.
\end{equation}
The rows represent the focal player’s strategy choices, while the columns correspond to those of the opponent. The first row and column denote cooperation, whereas the second row and column denote defection. Accordingly, $P$ represents the payoff received when both players choose to cooperate. Variables $Q$, $R$, and $S$ are interpreted analogously.

For convenience, generally we define $K=P-R$ and $L=Q-S$, representing the relative payoff difference between cooperation and defection under different opponent strategies. Specifically, $K$ denotes the payoff advantage of cooperation over defection when the opponent cooperates. A positive value of $K$ indicates that cooperation is more favourable, whereas a negative value indicates that defection is preferred. Similarly, $L$ represents the corresponding payoff difference when the opponent defects. In here we focus on the Prisoner's Dilemma setting, where $K=k$ and $L=l$ and both $k$ and $l$ are negative, implying that defection yields a higher payoff regardless of the opponent’s strategy. In the following sections, $K$ and $L$ inside differential equation denote general payoff differences and may correspond to any two-strategy game, whereas $k$ and $l$ are reserved for the Prisoner's Dilemma.

The purpose of control is to modify this incentive structure so that cooperation becomes more favourable. Let $\mu_r$ denote the reward for cooperation and $\mu_p$ denote the punishment for defection. The resulting payoff matrix becomes

\begin{equation}\label{payoffmatrixincent}
        \mathcal{P}_i=\begin{bmatrix}
                     P+\mu_r & Q+\mu_r\\
                     R+\mu_p & S+\mu_p\\
        \end{bmatrix}.
\end{equation}

where $\mu_r \geq 0$ and $\mu_p \leq 0$. Although the underlying interaction is represented by a binary matrix game, players may adopt mixed or continuous levels of cooperation. Thus, each node may exhibit partial cooperation rather than choosing exclusively between pure cooperation and pure defection. In this setting, reward and punishment may act simultaneously, motivating the introduction of a single control variable $u=\mu_r-\mu_p$ that captures their combined effect. This follows from
\[
(P+\mu_r)-(R+\mu_p)=(P-R)+(\mu_r-\mu_p)=k+u.
\]
Accordingly, the relative payoff differences become $k+u$ and $l+u$. In the dynamical model, the node-specific control variable $u_i$ represents the incentive mechanism at the $i$th node, where a positive value indicates reward-dominant control and a negative value indicates punishment-dominant control.
\subsection{Dynamical System}
Our work considers strategy evolution through an imitative strategy selection protocol, leading toa replicator dynamics. Before introducing the multilayer setting, the dynamics is considered for a scalar well-mixed system and for the single-layer network model. A summary of the three dynamical systems is provided in Table \ref{tab:dyn}.

\begin{table}[t]
\centering
\caption{Recap of Dynamical System.}
\label{tab:dyn}
\setlength{\tabcolsep}{4pt}
\renewcommand{\arraystretch}{1.08}
\footnotesize
\begin{tabular}{p{2cm} p{10cm} p{3.5cm}}
\hline
System name & Dynamical Equation with Conformity & Game Dynamical Equation \\
\hline
Scalar &
$\dot{x}=((k+u)x+(l+u)(1-x))(1-x)x$ &
$\dot{x}=f_s(x)+u\,h_s(x)$ \\
\hline
Networked (single-layer) &
\(
\begin{aligned}
\dot{\mathbf{x}}
={}&(A^*-I)\mathbf{x}\\
&+\operatorname{diag}\!\left(
A^*\!\left[
(\mathds{1}k+\mathbf{u})\odot\mathbf{x}
+(\mathds{1}l+\mathbf{u})\odot(\mathds{1}-\mathbf{x})
\right]
\right)
\operatorname{diag}(\mathds{1}-\mathbf{x})A^*\mathbf{x}
\end{aligned}
\)&
$\dot{\mathbf{x}}_{gs}=\mathbf{f}_v+G_v\mathbf{u}$ \\
\hline
Networked (Multilayer) &
\(
\begin{aligned}
\dot{\mathbf{x}}
={}&(A_1^*-I)\mathbf{x}\\
&+\operatorname{diag}\!\left(
A_2^*\!\left[
(\mathds{1}k+\mathbf{u})\odot\mathbf{x}
+(\mathds{1}l+\mathbf{u})\odot(\mathds{1}-\mathbf{x})
\right]
\right)
\operatorname{diag}(\mathds{1}-\mathbf{x})A_1^*\mathbf{x}
\end{aligned}
\)&
$\dot{\mathbf{x}}_{gm}=\mathbf{f}_m+G_m\mathbf{u}$ \\
\hline
\end{tabular}
\end{table}
\subsubsection{Scalar Dynamical System}
In the scalar setting, replicator dynamics describe the evolution of strategy proportions through the interaction of the terms $x$ for cooperation and $(1-x)$ for defection, scaled by the payoff-difference parameter $\beta$, which represents the dominant-strategy incentive within the population. A positive value of $\beta$ indicates that cooperation yields a higher payoff, whereas a negative value favours defection.

The variable $x$ denotes the proportion of cooperators in the population, where $x=1$ corresponds to full cooperation and $x=0$ to full defection. When $\beta>0$, the state derivative $\dot{x}$ becomes positive, increasing the proportion of cooperators. Conversely, when $\beta<0$, $\dot{x}$ becomes negative, causing cooperation to decline. The resulting dynamical equation is
\begin{equation}\label{xdotscalar4}
\dot{x}= \beta(1-x)x.
\end{equation}

We define the payoff of cooperation as $\Pi_c=(P+\mu_r)x+(Q+\mu_r)(1-x)$ and the payoff of defection as $\Pi_d=(R+\mu_p)x+(S+\mu_p)(1-x)$. The payoff difference is then given by $\beta=\Pi_c-\Pi_d$, which yields $\beta=(k+u)x+ (l+u)(1-x)$. 

This expression can be rearranged as $(kx+ux+l(1-x)+u-ux)$, which simplifies to
\begin{equation}\label{betascalar}
\beta=(kx+l(1-x)+u).
\end{equation}
We then define $h_s(x)=(1-x)x$ and $f_s(x)=\big(kx+l(1-x)\big)h_s(x)$, such that
\begin{equation}\label{fshs}
\dot{x}=f_s(x)+u \ h_s(x).
\end{equation}

\subsubsection{Networked System}
In the networked setting, whether single-layer or multilayer, the cooperation level of each node $i$ is represented by the strategy vector $\mathbf{x}$, where entries take values in $[0,1]$. This formulation provides a node-level representation of strategic behaviour. In addition to the adjacency matrix $A$, we row-normalised the matrix $A$ to become a row stochastic matrix
\begin{equation}\label{Astar}
 A^*=\operatorname{diag}^{-1}(A\mathds{1})A.
\end{equation}
The matrix $A^*$ transforms the scalar cooperation term into the neighbourhood-averaged state $\Bar{\mathbf{x}}=A^*\mathbf{x}$, which represents the weighted average cooperation level among neighbouring nodes. This formulation captures local strategic interactions while preserving the boundedness of the system within $[0,1]$ \cite{aurachmanPunzo2025strategy}.

As an extension of the scalar well-mixed system, the single-layer network dynamics are formulated by incorporating the matrix $A^*$ and the state vector $\mathbf{x}$ as
\begin{equation}\label{xdotuvector}
        \dot{\mathbf{x}}=(A^*-I)\mathbf{x}+diag( \boldsymbol{\beta}) \,diag(\mathds{1}-\mathbf{x})A^*\mathbf{x}.
\end{equation}
The term $(A^*-I)\mathbf{x}$ represents a conformity mechanism as a linear consensus which, modelled through a Laplacian matrix, drives each node toward alignment to their neighbours' strategies, independently of the payoff-driven game dynamics.

Similar to the scalar well-mixed setting, the interaction term in (\ref{xdotuvector}) retains the multiplicative structure involving $\mathbf{x}$ and $\mathds{1}-\mathbf{x}$, but is mediated by the network matrix $A^*$. This reflects that the spread of cooperative influence is not uniformly distributed across the population, but constrained by the underlying network structure. The scalar payoff difference $\beta$ in the well-mixed model is correspondingly extended to the vector $\boldsymbol{\beta}\in\mathbb{R}^{N}$, allowing each node to have a distinct local incentive profile determined by its network interactions.

In (\ref{xdotuvector}), the payoff structure is modified through the incentive mechanism, yielding the following expression for $\boldsymbol{\beta}$:
\begin{equation}\label{betasinglelayer}
    \boldsymbol{\beta}=A^*\left[ (\mathds{1}k+\mathbf{u}) \odot \mathbf{x} + (\mathds{1}l+\mathbf{u})\odot(\mathds{1}-\mathbf{x})  \right],
\end{equation}
where $\odot$ denotes the Hadamard (element-wise) product. This formulation extends the well-mixed expression in (\ref{betascalar}) by incorporating the network interaction matrix $A^*$. Unlike the scalar setting, where the payoff difference is determined by the aggregate population state, the networked formulation evaluates payoff interactions locally through the proportions of cooperation and defection within each node’s neighbourhood as weighted by $A^*$.

The control variable $\mathbf{u}$ is defined as a vector whose components may differ across nodes, allowing each node to receive a distinct level of incentive in the form of reward or punishment.

For convenience The $\boldsymbol{\beta}$ can also be rewritten as
$\boldsymbol{\beta}=A^*\left[(\mathds{1}k+\mathbf{u})\odot \mathbf{x}+(\mathds{1}l+\mathbf{u})\odot(\mathds{1}-\mathbf{x})
\right]=A^*\left[k\mathbf{x}+l(\mathds{1}-\mathbf{x})+\mathbf{u}\right].$
We define $\mathbf{g}_v:=diag(\mathds{1}-\mathbf{x})(A^*\mathbf{x})$, which represents the diffusion of cooperative strategy across the network. Furthermore, the payoff-driven dynamics without the conformity mechanism are defined as \\
$\mathbf{f}_v
:=
\operatorname{diag}\!\big(\mathbf{g}_v\big)\,
A^*\Big(k\mathbf{x}+l(\mathds{1}-\mathbf{x})\Big).$ with
$G_v(\mathbf{x})
:=
\operatorname{diag}\!\big(\mathbf{g}_v\big)\,A^*$. 
The resulting game dynamics without conformity, but retaining the control input, are given by
\begin{equation}\label{xdotumatrix1}
\dot{\mathbf{x}}_{gs}=(\mathbf{f}_v+G_v\mathbf{u}).
\end{equation}

\subsubsection{Multilayer-networked System}
The multilayer network dynamics are described by
\begin{equation}\label{xdotmultilayer}
        \dot{\mathbf{x}}=(A_1^*-I)\mathbf{x}+diag( \boldsymbol{\beta}_m) \,diag(\mathds{1}-\mathbf{x})A_1^*\mathbf{x},
\end{equation}
which retain a structure similar to the single-layer model in (\ref{xdotuvector}), but distinguish between the network governing social conformity and strategy diffusion and the network governing payoff interactions. The conformity mechanism and strategy diffusion are determined by the first-layer adjacency matrix $A_1^*$, representing the social influence network. The payoff dynamics, in contrast, are evaluated using the second-layer adjacency matrix $A_2^*$, which represent strategic interaction network through
\begin{equation}
    \boldsymbol{\beta}_m=A_2^*\left[ (k\mathds{1}+\mathbf{u}) \odot \mathbf{x} + (l\mathds{1}+\mathbf{u})\odot(\mathds{1}-\mathbf{x})  \right].
\end{equation}
A positive component of $\boldsymbol{\beta}_m$ indicates that cooperation is locally more incentivised, whereas a negative component favours defection. Since $\boldsymbol{\beta}_m$ is node-dependent, the same underlying game parameters may produce different locally preferred strategies across the network, depending on each node’s cooperation level and the cooperation levels of its neighbours. Similar to the single-layer formulation in (\ref{betasinglelayer}), the control vector $\mathbf{u}$ modifies the payoff parameters $k$ and $l$, thereby altering the local game incentives. Unlike the scalar formulation in (\ref{betascalar}), the vector-valued control allows heterogeneous incentive allocation across nodes.

The resulting payoff difference vector $\boldsymbol{\beta}_m$ is then incorporated into the multilayer dynamical system in (\ref{xdotmultilayer}) to determine the evolution of $\mathbf{x}$ over time. While $A_2^*$ governs the payoff interactions that shape the local direction of strategic change, the diffusion of strategy and conformity mechanism is mediated by the separate adjacency matrix $A_1^*$. Consequently, the multilayer structure separates the roles of strategic interaction and social diffusion: $A_2^*$ determines the local payoff structure, whereas $A_1^*$ regulates how strategic changes propagate through neighbouring interactions.

To facilitate the analytical derivation of the optimal control, the multilayer dynamical system is reformulated in a simplified form. The payoff difference vector $\boldsymbol{\beta}_m$ can be rewritten as
$\boldsymbol{\beta}_m=A_2^*\left[(\mathds{1}k+\mathbf{u})\odot \mathbf{x}+(\mathds{1}l+\mathbf{u})\odot(\mathds{1}-\mathbf{x})
\right]=A^*\left[k\mathbf{x}+l(\mathds{1}-\mathbf{x})+\mathbf{u}.\right]$
We further define the diffusion term $\mathbf{g}_m:=(\mathds{1}-\mathbf{x})\odot(A_1^*\mathbf{x}).$\\
The multilayer payoff-driven dynamics without the conformity mechanism are then defined as $\mathbf{f}_m
:=\operatorname{diag}\!\big(\mathbf{g}_m\big)\,
A_2^*\Big(k\mathbf{x}+l(\mathds{1}-\mathbf{x})\Big)$ and
$G_m(\mathbf{x})
:=
\operatorname{diag}\!\big(\mathbf{g}_m(\mathbf{x})\big)\,A_2^*.$
Accordingly, the incentive-controlled multilayer game dynamics without conformity can be expressed as
\begin{equation}\label{xdotumatrix2}
\dot{\mathbf{x}}_{gm}=(\mathbf{f}_m+G_m\mathbf{u}).
\end{equation}

\section{Formal Statement}
This section will present the analytical solution for each dynamical system, recapped on table \ref{tab:dyn}. The solution will be presented by several formal statements, which are recapitulated in table \ref{tab:formal_recap}. It will begin with a lemma proving the defective equilibrium of the Prisoner's Dilemma dynamical system without control, and then prove the optimal solution that drives the system toward cooperation.
\begin{table}[t]
\centering
\caption{Recap of formal statements.}
\label{tab:formal_recap}
\begin{tabular}{p{4.0cm} p{12.4cm}}
\hline
\textbf{Statement} & \textbf{Main result} \\
\hline
Lemma~\ref{scalarequilibrium} Equilibrium of Scalar System&
For $\dot{x}=(Kx+L(1-x))(1-x)x$, the system moves toward cooperation if $K>0$ and $L>0$, and toward defection if $K<0$ and $L<0$. \\
\hline
Theorem \ref{optscalar} Optimal Control for scalar system&
For the Prisoner's Dilemma setting, a stationary-HJB candidate feedback is
$u^*(x)=-2\big(kx+l(1-x)\big)$, make the system move toward cooperation. \\
\hline
Lemma~\ref{vectorequilibrium} Equilibrium of Networked System &
For the single-layer networked system in equation \eqref{xdotuvector}, the direction of motion is toward cooperation when $K>0,L>0$ and toward defection when $K<0,L<0$. \\
\hline
Theorem \ref{optinvertible} Optimal Control for networked system  with invertible parameter &
For $\dot{\mathbf{x}}_{gs}=\mathbf{f}_v+G_v\mathbf{u}$, if $G_vG_v^\top$ is invertible, a stationary-HJB candidate control is
$\mathbf{u}^*=-2\,G_v^\top(G_vG_v^\top)^{+}\mathbf{f}_v$, make the system moves toward cooperation. \\
\hline
Lemma~\ref{multilayerequilibrium} Equilibrium of Multi-layer Networked System&
For the single-layer networked system in equation \eqref{xdotmultilayer}, the system moves toward cooperation for $K>0,L>0$ and toward defection for $K<0,L<0$. \\
\hline
Theorem \ref{optmultiinver} Optimal Control for multilayer networked system  with invertible parameter &
For $\dot{\mathbf{x}}_{gm}=\mathbf{f}_m+G_m\mathbf{u}$, if $G_mG_m^\top$ is invertible, a stationary-HJB candidate control is
$\mathbf{u}^*=-2\,G_m^\top(G_mG_m^\top)^{-1}\mathbf{f}_m$, and make the system moves toward cooperation. \\
\hline
\end{tabular}
\end{table}

The first lemma \ref{scalarequilibrium} proves that the scalar system with the Prisoner's Dilemma setting will make defection the only stable equilibrium.
\subsection{Optimal Incentive for the Scalar System}
\begin{lemma}\label{scalarequilibrium}

The scalar dynamical system 
\begin{equation}\label{bigkl}
    \dot{x}= (Kx+ L(1-x))(1-x)x.
\end{equation}
 will move toward cooperation if $K>0$ and $L>0$, and move towards defection if it follows Prisoner's Dilemma setting, which has $K<0$ and $L<0$
\end{lemma}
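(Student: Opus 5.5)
The plan is a sign analysis of the right-hand side of \eqref{bigkl} on the invariant interval $[0,1]$, followed by a short stability check at the two boundary equilibria. Write $\dot{x}=F(x)=\beta(x)\,h_s(x)$ with $\beta(x)=Kx+L(1-x)$ and $h_s(x)=(1-x)x$, as in \eqref{fshs} with $u=0$. I will first show that $[0,1]$ is forward invariant. This holds because $F(0)=F(1)=0$ and $F$ is a polynomial, hence locally Lipschitz, so by uniqueness no trajectory starting in $(0,1)$ can reach the boundary in finite time. On the open interval $(0,1)$ we have $h_s(x)>0$, so the sign of $\dot{x}$ is exactly the sign of $\beta(x)$.

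The key step is to observe that $\beta(x)$ is a convex combination of $K$ and $L$, since the weights $x$ and $1-x$ are nonnegative and sum to one. Hence $\beta(x)\in[\min(K,L),\max(K,L)]$ for every $x\in[0,1]$. If $K>0$ and $L>0$, then $\beta(x)\ge\min(K,L)>0$, so $\dot{x}>0$ on $(0,1)$ and $x(t)$ is strictly increasing. If $K<0$ and $L<0$, which is the Prisoner's Dilemma setting, then $\beta(x)\le\max(K,L)<0$, so $\dot{x}<0$ and $x(t)$ is strictly decreasing. In either case the motion is monotone, which establishes the claimed direction of motion.

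To upgrade ``moves toward'' into convergence, I will note that a bounded monotone trajectory of a scalar autonomous ODE converges to an equilibrium. The only equilibria in $[0,1]$ are $x=0$ and $x=1$, because $\beta$ has no zero there. So in the first case $x(t)\to 1$ (full cooperation), and in the second case $x(t)\to 0$ (full defection), for every $x(0)\in(0,1)$. As a consistency check, I will linearise: $F'(0)=L$ and $F'(1)=-K$. These give $x=1$ asymptotically stable and $x=0$ unstable when $K,L>0$, and the reverse when $K,L<0$.

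No step is a serious obstacle. The only point needing care is to justify that trajectories remain in $(0,1)$ and do not hit the boundary equilibria in finite time, so that the strict sign of $h_s$ can be used throughout; the uniqueness argument above handles this. I would also state explicitly that initial conditions $x(0)\in\{0,1\}$ are stationary, and are excluded from the convergence claim.
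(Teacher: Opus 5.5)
Your argument is correct, but it takes a different route from the paper. The paper uses Lyapunov functions: $V=\tfrac12(1-x)^2$ for the cooperative equilibrium and $V=\tfrac12x^2$ for the defective one. These give $\dot V=-x(1-x)^2\bigl(Kx+L(1-x)\bigr)$ and $\dot V=x^2(1-x)\bigl(Kx+L(1-x)\bigr)$, and the paper concludes $\dot V\le 0$ from the termwise sign of $Kx+L(1-x)$. That sign fact is your convex-combination observation, stated less explicitly.

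Your phase-line argument is more rigorous and yields a stronger conclusion. You prove invariance of $(0,1)$, convergence to $x=1$ (resp.\ $x=0$) from \emph{every} interior initial condition, and instability of the opposite endpoint via $F'(0)=L$, $F'(1)=-K$. The paper, as written, only asserts $\dot V\le 0$, which by itself gives Lyapunov stability. Asymptotic stability needs the strict sign of $\dot V$ on $(0,1)$. Moreover, $\dot V$ also vanishes at the opposite endpoint, where $V\neq 0$, so the Lyapunov argument is inherently local unless one adds exactly your monotonicity reasoning.

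What the paper's choice buys is a template for Lemmas~\ref{vectorequilibrium} and~\ref{multilayerequilibrium}. There the conformity term $(A^*-I)\mathbf{x}$ destroys componentwise monotonicity of $x_i(t)$, so one tracks the aggregate $\pi^\top\mathbf{x}$ instead, for which the conformity term drops out. Your convex-combination fact is precisely what justifies the sign claim $\boldsymbol{\beta}\succ 0$ (resp.\ $\prec 0$) in those proofs, since the normalised adjacency matrices are row-stochastic.
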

\begin{proof}
To establish the asymptotic stability of the equilibrium $x=1$ ($x=0$), consider the Lyapunov function $V=\frac{1}{2}(1-x)^2$ ($V=\frac{1}{2}x^2$), The function $V$ is positive or equals zero at $x=1$ ($x=0$).  The time derivative is given by $\dot{V}=(x-1)\dot{x}$ ($\dot{V}=x\dot{x}$). Substituting the system dynamics yields $\dot{V}=(x-1)(Kx+ L(1-x))(1-x)x$ ($\dot{V}=x(Kx+ L(1-x))(1-x)x$).

Since $1-x$ and ($x$) is non-negative, the sign of $\dot{V}$ is determined by $(x-1)(Kx+ L(1-x)$ ($x(Kx+ L(1-x)$). Therefore, since $x-1\leq0$ ($x\geq0$) multiplied by $(x-1)(Kx+ L(1-x)$ that have $K>0$ and $L>0$ ($K<0$ and $L<0$) will make every term in the summation nonpositive, yielding $\dot{V}\leq0$. It follows that $K>0$ and $L>0$ ($K<0$ and $L<0$) drive the system toward cooperation (defection), with the equilibrium $x=1$ ($x=0$) being asymptotically stable.
\end{proof}

Using the statement from Lemma \ref{scalarequilibrium}, the following theorem presents the optimal control solution with fixed end state and free end time, using the Hamilton-Jacobi-Bellman equation method to find the optimal control strategy. The fixed endpoint will be some point near the cooperation.

\begin{theorem}\label{optscalar}
Consider the optimal control problem associated with the Prisoner's Dilemma ($k<0$ and $l<0$) in the scalar dynamical system given by equation \ref{fshs}. The control input that minimises the performance index $J=\int_0^{t_f}\frac12 u^2,dt$, subject to the terminal cooperation constraint $x(t_f)=1-\theta$ and free terminal time, is given by $u^*(x)=-2\big(kx+l(1-x)\big)$.
\end{theorem}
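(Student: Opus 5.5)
The plan is to treat the problem as a free-final-time, fixed-endpoint optimal control problem. We write down the stationary Hamilton--Jacobi--Bellman (HJB) equation for the value function $V(x)$, i.e.\ the minimal cost-to-go from state $x$ to the target $x=1-\theta$. Since the terminal time is free and the running cost does not depend explicitly on $t$, the value function is time-independent and $\partial V/\partial t=0$. The HJB equation then reads
\begin{equation*}
0=\min_{u}\Big\{\tfrac12 u^2+V'(x)\big(f_s(x)+u\,h_s(x)\big)\Big\},
\end{equation*}
with boundary condition $V(1-\theta)=0$.

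The first step is the pointwise minimisation over $u$, which is an unconstrained convex quadratic. It gives
\begin{equation*}
u^*=-V'(x)\,h_s(x).
\end{equation*}
Substituting this back yields the scalar ODE
\begin{equation*}
V'(x)f_s(x)-\tfrac12\big(V'(x)h_s(x)\big)^2=0 .
\end{equation*}
This factors as
\begin{equation*}
V'(x)\Big(f_s(x)-\tfrac12 V'(x)h_s(x)^2\Big)=0 .
\end{equation*}

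The second step is to choose the correct root.
\begin{itemize}
\item The root $V'\equiv 0$ gives $u=0$. By Lemma~\ref{scalarequilibrium}, with $k<0$ and $l<0$ the uncontrolled system moves toward defection, so it can never reach $x(t_f)=1-\theta>0$ from below. This branch is therefore infeasible.
\item The nontrivial root is
\begin{equation*}
V'(x)=\frac{2f_s(x)}{h_s(x)^2}=\frac{2\big(kx+l(1-x)\big)}{h_s(x)} .
\end{equation*}
It is well defined on the open interval $(0,1)$, where $h_s>0$. Plugging it into the minimiser gives
\begin{equation*}
u^*(x)=-2\big(kx+l(1-x)\big).
\end{equation*}
\end{itemize}
Since $k,l<0$, this control is strictly positive (reward-dominant). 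The value function can be recovered, if desired, by integrating $V'$ from $1-\theta$, which fixes the additive constant.

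The third step is to verify that this feedback actually meets the terminal constraint. Under $u^*$ the closed loop becomes
\begin{equation*}
\dot x=f_s(x)+u^*h_s(x)=-\big(kx+l(1-x)\big)(1-x)x .
\end{equation*}
This is exactly the system of Lemma~\ref{scalarequilibrium} with $K=-k>0$ and $L=-l>0$. By that lemma the trajectory moves monotonically toward $x=1$, so for any $x(0)\in(0,1-\theta)$ it reaches $1-\theta$ in finite time. Here $\theta>0$ is what guarantees finiteness, since $x=1$ itself is only approached asymptotically. We will also remark that $V$ is decreasing toward the target, because $V'<0$ on $(0,1)$; hence it is consistent as a cost-to-go for initial states below $1-\theta$.

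The main obstacle is rigour in the verification step: showing that the HJB candidate is genuinely optimal and not merely stationary. The value function is singular at $x=0$ and $x=1$, where $h_s$ vanishes, and for free terminal time the Hamiltonian-equals-zero condition must be the one that selects the branch. I would handle this with a standard verification argument. For any admissible control $u$ reaching $1-\theta$, completing the square gives
\begin{equation*}
\tfrac12 u^2\ge -V'(x)\big(f_s(x)+u\,h_s(x)\big)=-\tfrac{d}{dt}V(x(t)) .
\end{equation*}
Integrating along the trajectory then yields $J\ge V(x(0))-V(1-\theta)=V(x(0))$, with equality when $u=u^*$. The argument is restricted to trajectories that stay in $(0,1)$, which is the natural admissible set. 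Failing a full verification, I would present the result as the stationary-HJB candidate, consistent with the paper's wording in Table~\ref{tab:formal_recap}.
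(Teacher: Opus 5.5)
Your derivation follows the paper's proof step for step: stationary HJB, pointwise minimisation giving $u^*=-V'(x)h_s(x)$, factorisation of the reduced equation, rejection of the root $V'\equiv 0$ via Lemma~\ref{scalarequilibrium}, and the closed-loop check that $\dot x=-\big(kx+l(1-x)\big)(1-x)x$ is the lemma's system with $K=-k>0$, $L=-l>0$. Your verification step (completing the square to get $J\ge V(x(0))$, with equality exactly at $u^*$) and your restriction $x(0)\in(0,1-\theta)$ go beyond the paper, which stops at the stationary-HJB candidate; together they are what turns that candidate into an actual optimality proof, and the restriction is needed because for $x(0)>1-\theta$ the zero control reaches the target at no cost.
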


\begin{proof}
The form of optimal control problem will become
\begin{equation}
\min_{u}\quad
J
=\int_{0}^{t_f}\frac{1}{2}u^2 dt,
\end{equation}
\begin{equation}
\text{s.t.}\;
\begin{cases}
\dot{x}=f_s(x)+u\,h_s(x),\\[2pt]
x(0)=x_0,\\[2pt]
x(t_f)=1-\theta.
\end{cases}
\end{equation}

The Hamiltonian is $H(x,u,\frac{dJ^*}{dx})=\frac{1}{2}u^2+\frac{dJ^*}{dx}\dot{x}$ or we can say 
\begin{equation}
H=\frac{1}{2}u^2+\frac{dJ^*}{dx}(f_s(x)+u\,h_s(x)). 
\end{equation}

Then the necessary condition that the optimal control must satisfy is $\frac{\partial H}{\partial u}=u+\frac{dJ^{*}}{dx}h_s(x)=0$. From there we found that optimal control $u^*$ is 
\begin{equation}\label{optimalus}
    u^*=-\frac{dJ^{*}}{dx}h_s(x). 
\end{equation}

The stationary Hamilton-Jacobi Bellman equation can be written as
\begin{equation}
0=\frac{\partial J^{}}{\partial t}+H(x,u,\frac{dJ^*}{dx}).
\end{equation}
Assuming $J^*$ does not depend on time $t$, which means that $\frac{\partial J^{}}{\partial t}=0$. Thus the HJB equation can be written as
\begin{equation}
0=\frac{1}{2}u^2+\frac{dJ^*}{dx}(f_s(x)+u\,h_s(x)).
\end{equation}
By inserting $u^*$ into the HJB equation 
\begin{equation}
0=\frac{1}{2}\big(u^*\big)^2+\frac{dJ^*}{dx}(f_s(x)+u^*h_s(x)).
\end{equation}

Since $u^*=-\frac{dJ^*}{dx}h_s(x)$, we have the HJB equation will become
\begin{equation}
0=\frac{1}{2}\left(\frac{dJ^*}{dx}\right)^2 h_s(x)^2
+\frac{dJ^*}{dx}\left(f_s(x)-\frac{dJ^*}{dx}h_s(x)^2\right)
=\frac{dJ^*}{dx}\,f_s(x)-\frac{1}{2}\left(\frac{dJ^*}{dx}\right)^2h_s(x)^2.
\end{equation}
Factoring $\frac{dJ^*}{dx}$ gives
\begin{equation}
0=\frac{dJ^*}{dx}\left(f_s(x)-\frac{1}{2}\frac{dJ^*}{dx}h_s(x)^2\right),
\end{equation}
which yields two candidate solutions:
\begin{equation}
\left(\frac{dJ^*}{dx}\right)_1=0
\qquad\text{or}\qquad
\left(\frac{dJ^*}{dx}\right)_2=\frac{2f_s(x)}{h_s(x)^2}
=\frac{2\big(kx+l(1-x)\big)}{(1-x)x}, \quad x\in(0,1).
\end{equation}
From $u^*(x)=-\frac{dJ^*}{dx}h_s(x)$ we obtain the corresponding controls
\begin{equation}\label{uscalar}
u_1^*(x)=0,
\qquad
u_2^*(x)=-\left(\frac{dJ^*}{dx}\right)_2 h_s(x)
=-2\big(kx+l(1-x)\big).
\end{equation}
If $u=0$, the dynamical system in \ref{fshs} in the form of equation \ref{bigkl} will make $K<0$ and $L<0$, which will form a Prisoner's Dilemma setting. Based on the lemma \ref{scalarequilibrium}, the system drifts toward defection; thus, $u>0$ and $0<x<1$ should be satisfied. Therefore the $\frac{dJ^*}{dx}$ cannot be $0$ since, based on equation \ref{optimalus}, it will make $u=0$. Therefore, we use $(\frac{dJ^*}{dx})_2$, which corresponds to $u_2=-2\big(kx+l(1-x)\big)$, which becomes the optimal control for the system. This optimal control will modify the dynamical equation \ref{xdotscalar4} while considering $\beta$ in equation \ref{betascalar}  to become $\dot{x}=(kx+l(1-x)+u_2)(1-x)x=(kx+l(1-x)-2\big(kx+l(1-x)\big))(1-x)x=-(kx+l(1-x))(1-x)x$ or
\begin{equation}\label{optimizedxdotscal}
    \dot{x}=(-kx+-l(1-x))(1-x)x
\end{equation}
In the Prisoner's Dilemma setting, since $K=-k$ and $L=-l$, $k<0$ and $l<0$ imply $K>0$ and $L> 0$. Based on Lemma \ref{scalarequilibrium}, the system in equation \ref{fshs} will move toward cooperation.
\end{proof}
The Theorem \ref{optscalar} shows that the optimal control strategy is the function of current cooperation intensity $x$, the payoff matrix variable $k$ and $l$. It shows that the amount of incentive will change over time, adapting to the system's current cooperation intensity.

\subsection{Optimal Incentive for Single Layer Networked System}
The next step is to find the analytical solution of optimal control in a single-layer networked system. We begin with formal statement that explain the equilibrium of the system as presented in lemma
\begin{lemma}\label{vectorequilibrium}
Consider the dynamical system
\begin{equation}\label{bigklvector}
        \dot{\mathbf{x}}=(A^*-I)\mathbf{x}+diag(A^*\left[ K\mathbf{x} + L(\mathds{1}-\mathbf{x})  \right]) \,diag(\mathds{1}-\mathbf{x})A^*\mathbf{x}.
\end{equation}
If $K>0$ and $L>0$, the system promotes cooperation. Conversely, under the Prisoner's Dilemma setting, where $K<0$ and $L<0$, the system promotes defection.
\end{lemma}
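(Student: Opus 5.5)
We mirror the proof of Lemma~\ref{scalarequilibrium} with vector Lyapunov functions, $V_c=\tfrac12\|\mathds{1}-\mathbf{x}\|^2$ for cooperation and $V_d=\tfrac12\|\mathbf{x}\|^2$ for defection, both restricted to the invariant box $[0,1]^N$. Because the conformity term can push single coordinates either way, a sum-of-squares function may not decrease monotonically. We therefore keep a fallback built from the Perron left eigenvector of the row-stochastic matrix $A^*$. Since $A^*=\operatorname{diag}^{-1}(A\mathds{1})A$ with $A$ symmetric, this vector is the degree vector $\mathbf{d}=A\mathds{1}$: indeed $\mathbf{d}^\top A^*=\mathds{1}^\top A=\mathbf{d}^\top$. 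The weighted average $m(\mathbf{x})=\mathbf{d}^\top\mathbf{x}$ is then exactly preserved by the conformity term, since $\mathbf{d}^\top(A^*-I)\mathbf{x}=0$.

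First, invariance of $[0,1]^N$. At a face $x_i=0$ we have $\dot x_i=(A^*\mathbf{x})_i\ge 0$. At a face $x_i=1$ we have $\dot x_i=(A^*\mathbf{x})_i-1\le 0$, because the game term carries the factor $(1-x_i)$. Hence trajectories stay in the box, and $\mathds{1}$ and $\mathbf{0}$ are equilibria. Next, the sign of $\boldsymbol{\beta}=A^*[K\mathbf{x}+L(\mathds{1}-\mathbf{x})]$. Each entry of $K\mathbf{x}+L(\mathds{1}-\mathbf{x})$ is a convex combination of $K$ and $L$, and $A^*$ is row-stochastic and nonnegative. So $\boldsymbol{\beta}>0$ componentwise when $K,L>0$, and $\boldsymbol{\beta}<0$ when $K,L<0$; more precisely $\beta_i\ge\min(K,L)$ or $\beta_i\le\max(K,L)$ respectively. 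The game term $\operatorname{diag}(\boldsymbol{\beta})\operatorname{diag}(\mathds{1}-\mathbf{x})A^*\mathbf{x}$ therefore has every entry $\ge 0$ (resp.\ $\le 0$), since $(1-x_i)\ge 0$ and $(A^*\mathbf{x})_i\ge 0$.

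Combining these with $m$, we get
\[
\dot m=\sum_i d_i\beta_i(1-x_i)(A^*\mathbf{x})_i ,
\]
which has the sign of $K,L$. So $m$ increases monotonically toward $\mathbf{d}^\top\mathds{1}$ when $K,L>0$, and decreases toward $0$ in the Prisoner's Dilemma case. This statement, that the degree-weighted cooperation level moves monotonically toward cooperation or defection, is how we make ``promotes'' precise. For asymptotic convergence we apply LaSalle's invariance principle to $W=\mathbf{d}^\top\mathds{1}-m\ge 0$ (resp.\ $W=m$). The set where $\dot W=0$ requires $(1-x_i)(A^*\mathbf{x})_i=0$ for every $i$. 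Take a maximal invariant subset and examine the linear consensus part $(A^*-I)\mathbf{x}$ on it. We expect this to force $\mathbf{x}=c\mathds{1}$, with $c\in\{0,1\}$ or, in the degenerate case, with the game term vanishing.

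The main obstacle is exactly this LaSalle step. In the cooperative case, a node with $(A^*\mathbf{x})_i=0$ has all its neighbours, and itself through the self-loop, at zero, so on connected components the defection state $\mathbf{0}$ is also invariant. The claim must therefore be stated for initial conditions with $\mathbf{x}(0)\neq\mathbf{0}$ (resp.\ $\neq\mathds{1}$). We will handle this by assuming each layer is connected and using the self-loops $a_{ii}=1$. Then, if some $x_j>0$, consensus spreads positivity to every node in finite time, and $(A^*\mathbf{x})_i>0$ holds for all $i$ thereafter. The only invariant point in $\{\dot W=0\}$ is then $\mathds{1}$ (resp.\ $\mathbf{0}$). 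If that argument becomes heavy, we will fall back to the weaker conclusion in the spirit of Lemma~\ref{scalarequilibrium}: $\dot m\gtrless 0$, with $\mathds{1}$ (resp.\ $\mathbf{0}$) being the limit.
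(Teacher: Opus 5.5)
Your route is the paper's: a linear Lyapunov function built from the left Perron vector of $A^*$, with the conformity term cancelling and the sign of $\dot V$ read off from $\boldsymbol\beta$. Your explicit choice $\mathbf d=A\mathds{1}$ is correct and strictly positive on any layer. The gap is in the invariance step, on which that sign argument rests. On the face $x_i=0$ the game term does not vanish: its factors are $1-x_i=1$ and $(A^*\mathbf x)_i$, not $x_i$. With $\boldsymbol\beta=L\mathds{1}+(K-L)A^*\mathbf x$ one gets
\[
\dot x_i=(1+\beta_i)\,(A^*\mathbf x)_i \qquad\text{on } \{x_i=0\}.
\]
For $K,L>0$ this is still nonnegative, so your cooperative half survives. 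In the Prisoner's Dilemma case it is nonnegative only when $\beta_i\ge -1$. Taking $x_i=0$ with a single neighbour at a small $\epsilon>0$ shows that $L\ge -1$ is necessary, while $\min(K,L)\ge -1$ suffices. Otherwise the box $[0,1]^N$ is not forward invariant; this already happens for the paper's simulation value $l=-1.5$. Once a trajectory leaves the box you lose $1-x_i\ge 0$, $(A^*\mathbf x)_i\ge 0$, $\beta_i\le\max(K,L)$ and $W=m\ge 0$. So $\dot m\le 0$ is only a pointwise statement on the box and says nothing about the trajectory.

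This is not a technicality that a cleverer Lyapunov function would fix. Take the $4$-cycle with self-loops, so that $A^*=(I+A_0)/3$ (with $A_0$ the cycle's adjacency matrix) has eigenvalue $-1/3$ on $(1,-1,1,-1)^\top$, and set $K=L=-5$. The Jacobian of the system at $\mathbf 0$ is $(1+L)A^*-I$, whose eigenvalue on that vector is $4/3-1=1/3>0$. So defection is not even locally asymptotically stable. The paper's proof hides the same problem behind the unproved claims $\mathds{1}-\mathbf x\succeq 0$ and $A^*\mathbf x\succeq 0$, so matching it does not close the gap. You need an explicit extra hypothesis for the defection half, such as $\min(K,L)\ge -1$.

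Under that hypothesis, and with a connected layer, your LaSalle step is easier than you expect. On the box, $\dot W=0$ forces each node either to satisfy $x_i=1$ or to have $x_j=0$ on its whole closed neighbourhood. Connectivity then gives $\{\dot W=0\}=\{\mathbf 0,\mathds{1}\}$. The monotonicity of $m$ together with $\mathbf d\succ 0$ rules out the wrong endpoint whenever $\mathbf x(0)\neq\mathbf 0$ (respectively $\mathbf x(0)\neq\mathds{1}$). There is no need to track how positivity spreads.
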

\begin{proof}
To establish the asymptotic stability of the equilibrium $\mathbf{x}=\mathds{1}$ ($\mathbf{x}=\mathbf{0}$), consider the Lyapunov function $V=\pi^T(\mathds{1}-\mathbf{x})$ ($V=\pi^T\mathbf{x}$), where $\pi$ is the left eigenvector of $A^*$ associated with the eigenvalue $1$, satisfying $\pi^T A^*=\pi^T$. The function $V$ is positive definite and equals zero at $\mathbf{x}=\mathds{1}$ ($\mathbf{x}=\mathbf{0}$). The time derivative is given by $\dot{V}=-\pi^T\dot{\mathbf{x}}$ ($\dot{V}=\pi^T\dot{\mathbf{x}}$). Substituting the system dynamics yields $\dot{V}=-\pi^T\left((A^*-I)\mathbf{x}+diag(\boldsymbol{\beta}),diag(\mathds{1}-\mathbf{x})A^*\mathbf{x}\right)$ ($\dot{V}=\pi^T\left((A^*-I)\mathbf{x}+diag(\boldsymbol{\beta}),diag(\mathds{1}-\mathbf{x})A^*\mathbf{x}\right)$).

The conformity term satisfies $\pi^T(A^*-I)\mathbf{x}=0$ since $\pi^T A^*=\pi^T$, causing the conformity mechanism to vanish. If $K>0$ and $L>0$ ($K<0$ and $L<0$), then $\boldsymbol{\beta}\succ0$ ($\boldsymbol{\beta}\prec0$). Since $\pi\succ0$, $\mathds{1}-\mathbf{x}\succeq0$, and $A^*\mathbf{x}\succeq0$, the sign of $\dot{V}$ is determined by $\boldsymbol{\beta}$. Therefore, if $\boldsymbol{\beta}\succ0$ ($\boldsymbol{\beta}\prec0$), every term in the summation is nonpositive, yielding $\dot{V}\leq0$. It follows that $K>0$ and $L>0$ ($K<0$ and $L<0$) drive the system toward cooperation (defection), with the equilibrium $\mathbf{x}=\mathds{1}$ ($\mathbf{x}=\mathbf{0}$) being asymptotically stable.
\end{proof}

Understanding that the Prisoner's Dilemma will make the system move toward defection, the next Theorem \ref{optinvertible} will explain 
\begin{theorem}\label{optinvertible}
Consider the optimal control problem associated with the Prisoner's Dilemma ($k<0$ and $l<0$) in the single-layer dynamical system given by equation \ref{xdotumatrix1}. If $\Big(G_vG_v^\top\Big)$ is invertible, the control input that minimises the performance index $J
=\int_{0}^{t_f}\frac{1}{2}\,\mathbf{u}^{\top}\mathbf{u}\,dt$, subject to the terminal cooperation constraint $\mathbf{x}(t_f)=1-\theta$ and free terminal time, is given by $\mathbf{u}^*=-2\,G_v^\top \Big(G_vG_v^\top\Big)^{-1}\mathbf{f}_v$.
\end{theorem}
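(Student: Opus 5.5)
I will mirror the proof of Theorem~\ref{optscalar} in vector form. First I pose the problem: minimise $J=\int_0^{t_f}\frac12\mathbf{u}^\top\mathbf{u}\,dt$ subject to $\dot{\mathbf{x}}=\mathbf{f}_v+G_v\mathbf{u}$, $\mathbf{x}(0)=\mathbf{x}_0$, $\mathbf{x}(t_f)=\mathds{1}-\theta$, with free terminal time. The Hamiltonian is $H=\frac12\mathbf{u}^\top\mathbf{u}+\nabla J^{*\top}(\mathbf{f}_v+G_v\mathbf{u})$. Since $H$ is strictly convex in $\mathbf{u}$, the stationarity condition $\partial H/\partial\mathbf{u}=\mathbf{u}+G_v^\top\nabla J^*=0$ gives the unique minimiser
\[
\mathbf{u}^*=-G_v^\top\nabla J^*.
\]

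Next I use the stationary HJB equation. Free terminal time and the time-invariant data give $\partial J^*/\partial t=0$, so $0=\min_{\mathbf{u}}H$. Substituting $\mathbf{u}^*$ yields
\[
0=\nabla J^{*\top}\mathbf{f}_v-\tfrac12\nabla J^{*\top}G_vG_v^\top\nabla J^*.
\]
This is a scalar quadratic constraint on the vector $\mathbf{p}=\nabla J^*$. It is the main obstacle, because unlike the scalar case it does not determine $\mathbf{p}$ uniquely. As in the scalar case, one root is $\mathbf{p}=0$, which gives $\mathbf{u}=0$. By Lemma~\ref{vectorequilibrium} with $K=k<0$ and $L=l<0$, this root drives the system to $\mathbf{x}=\mathbf{0}$ and cannot meet the terminal constraint, so I discard it.

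For the nontrivial branch I plan to impose the componentwise analogue of the scalar factorisation, $\mathbf{f}_v=\frac12 G_vG_v^\top\mathbf{p}$. This choice clearly satisfies the quadratic equation, since then $\mathbf{p}^\top\mathbf{f}_v=\frac12\mathbf{p}^\top G_vG_v^\top\mathbf{p}$. Under the hypothesis that $G_vG_v^\top$ is invertible, it gives
\[
\mathbf{p}=2(G_vG_v^\top)^{-1}\mathbf{f}_v,\qquad \mathbf{u}^*=-2G_v^\top(G_vG_v^\top)^{-1}\mathbf{f}_v.
\]
I will state explicitly that this is the natural vector extension of the scalar $(dJ^*/dx)_2$, and that it is a candidate solution of the HJB equation rather than a uniquely forced one. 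Invertibility is also what makes $G_v^\top(G_vG_v^\top)^{-1}$ a right inverse of $G_v$. Among all controls producing the same $G_v\mathbf{u}$, this one is the minimum-norm control, which supports the cost-minimising interpretation.

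Finally I verify cooperation, as at the end of the proof of Theorem~\ref{optscalar}. Since $G_vG_v^\top(G_vG_v^\top)^{-1}=I$, the closed loop is
\[
\dot{\mathbf{x}}_{gs}=\mathbf{f}_v-2\mathbf{f}_v=-\mathbf{f}_v=\operatorname{diag}(\mathbf{g}_v)A^*\big((-k)\mathbf{x}+(-l)(\mathds{1}-\mathbf{x})\big).
\]
This is exactly the game part of \eqref{bigklvector} with $K=-k>0$ and $L=-l>0$. Invoking Lemma~\ref{vectorequilibrium}, using the game term with the same $\pi$-weighted Lyapunov argument in which the conformity term drops out anyway, shows that the system moves toward $\mathds{1}$. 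The trajectory therefore reaches $\mathds{1}-\theta$ in finite time, which makes the terminal constraint attainable and completes the argument.
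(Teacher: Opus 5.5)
Your proposal follows the paper's proof step for step: the Hamiltonian, $\mathbf{u}^*=-G_v^\top\nabla_{\mathbf{x}}J^*$, the reduced stationary HJB, rejecting $\mathbf{p}=\mathbf{0}$ via Lemma~\ref{vectorequilibrium}, and the closed loop $-\mathbf{f}_v$. You are more careful than the paper in noting that the single scalar equation $\mathbf{p}^\top(\mathbf{f}_v-\tfrac12G_vG_v^\top\mathbf{p})=0$ does not force $\mathbf{p}\in\{\mathbf{0},2(G_vG_v^\top)^{-1}\mathbf{f}_v\}$. But that is exactly where ``minimises'' is left unproved, by you and by the paper alike. Making the HJB identity hold pointwise is not solving the HJB equation. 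You would need a $C^1$ function $J^*$ with $\nabla_{\mathbf{x}}J^*=2(G_vG_v^\top)^{-1}\mathbf{f}_v$ that vanishes on the target, followed by a verification argument. For $N\ge2$ this field is in general not a gradient. Take $k=l$ and the three-node path with self-loops. Then $(A^{*\top})^{-1}\mathds{1}=(0,3,0)^\top$, and the field is $(0,\,6k/g_2(\mathbf{x}),\,0)^\top$ with $g_2=(1-x_2)(x_1+x_2+x_3)/3$. Its second component depends on $x_1$ while its first is zero, so its Jacobian is not symmetric. Your minimum-norm remark cannot substitute for this. $G_v=\operatorname{diag}(\mathbf{g}_v)A^*$ is square, so invertibility of $G_vG_v^\top$ means $G_v$ itself is invertible, $G_v^\top(G_vG_v^\top)^{-1}=G_v^{-1}$, and every $G_v\mathbf{u}$ has a unique preimage. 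In fact $\mathbf{f}_v=G_v\big(k\mathbf{x}+l(\mathds{1}-\mathbf{x})\big)$, so the formula collapses to $\mathbf{u}^*=-2\big(k\mathbf{x}+l(\mathds{1}-\mathbf{x})\big)$. This is the scalar law of Theorem~\ref{optscalar} applied node by node, blind to the coupling through $A^*$.

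The final step also fails as written. Under $\dot{\mathbf{x}}_{gs}=-\mathbf{f}_v$, each coordinate satisfies $\dot x_i=(1-x_i)(A^*\mathbf{x})_i\big(A^*(-k\mathbf{x}-l(\mathds{1}-\mathbf{x}))\big)_i>0$ in the interior. So $x_i$ crosses $1-\theta_i$ exactly once, at some time $t_i$, and unless all the $t_i$ coincide the trajectory never equals $\mathds{1}-\boldsymbol{\theta}$. Convergence toward $\mathds{1}$, which is all Lemma~\ref{vectorequilibrium} provides, does not imply passing through a prescribed point of $\mathbb{R}^N$. The intermediate-value argument that closes the scalar case has no vector analogue. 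Hence for generic $\mathbf{x}_0$ your candidate is not even admissible for the stated problem with the equality constraint $\mathbf{x}(t_f)=\mathds{1}-\boldsymbol{\theta}$. To get a provable statement you would have to replace the point target by a set, e.g.\ $\{\mathbf{x}: x_i\ge 1-\theta_i\ \forall i\}$, which is the stopping rule the paper's simulations actually use. You would then still need a genuine verification argument for that target, which the pointwise HJB manipulation does not supply.
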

\begin{proof}
The form of optimal control will become
\begin{equation}\label{optproblemSL}
\min_{\mathbf{u}}\quad
J
=\int_{0}^{t_f}\frac{1}{2}\,\mathbf{u}^{\top}\mathbf{u}\,dt,
\end{equation}
\begin{equation}
\text{s.t.}\;
\begin{cases}
\dot{\mathbf{x}}_{gs}=\mathbf{f}_v+G_v\mathbf{u},\\
\mathbf{x}(0)=\mathbf{x}_0,\\[2pt]
\mathbf{x}(t_f)=\mathds{1}-\boldsymbol{\theta}.
\end{cases}
\end{equation}

\begin{equation}
\mathcal{H}\!
=
\frac12\,\mathbf{u}^\top \mathbf{u}
+(\nabla_{\mathbf{x}}J^*)^\top\left(\mathbf{f}_v+G_v\mathbf{u}\right).
\end{equation}

The stationarity condition with respect to $\mathbf{u}$ is
\begin{equation}
\frac{\partial \mathcal{H}}{\partial \mathbf{u}}
=
\mathbf{u}+G_v^\top \nabla_{\mathbf{x}}J^*
,
\end{equation}

Then the necessary condition that the optimal control must satisfy is $\frac{\partial \mathcal{H}}{\partial \mathbf{u}}=\mathbf{u}+\nabla_{\mathbf{x}}J^*(\mathbf{x})^{\top}\mathbf{h}_s(x)=0$. From there we found that optimal control $\mathbf{u}^*$ is
\begin{equation}\label{unablainvertable}
\mathbf{u}^*=-G_v^\top \nabla_{\mathbf{x}}J^*.
\end{equation}

Let $J^*(\mathbf{x},t)$ denote the value function. The Hamilton--Jacobi--Bellman equation is
\begin{equation}\label{HJB_full_timeSL}
0=
\frac{\partial J^*(\mathbf{x},t)}{\partial t}
+\mathcal{H}\!\left(\mathbf{x},\mathbf{u},t,\nabla_{\mathbf{x}}J^*\right),
\end{equation}
or equivalently,
\begin{equation}\label{HJB_full_time_expandedSL}
0=
\frac{\partial J^*(\mathbf{x},t)}{\partial t}
+
\frac{1}{2}\,\mathbf{u}^{*\top} \mathbf{u}^*
+\big(\nabla_{\mathbf{x}}J^*(\mathbf{x},t)\big)^\top\!\left(\mathbf{f}_v+G_v\mathbf{u}^*\right).
\end{equation}

Assume that $J^*(\mathbf{x})$ does not explicitly depend on $t$), so that
\begin{equation}\label{Jt_zeroSL}
\frac{\partial J^*(\mathbf{x},t)}{\partial t}=0.
\end{equation}
Then the stationary HJB equation will become
\begin{equation}\label{HJB_stationary_cleanSL}
0=
\frac{1}{2}\,\mathbf{u}^{*\top} \mathbf{u}^*
+\big(\nabla_{\mathbf{x}}J^*(\mathbf{x},t)\big)^\top\!(\mathbf{f}_v+G_v\mathbf{u}^*).
\end{equation}

We now substitute $\mathbf{u}^*$ based on equation \ref{unablainvertable}. First,
\begin{equation}\label{uTu_sub_nopSL}
(\mathbf{u}^*)^\top\mathbf{u}^*
=
\big(-G_v^\top\nabla_{\mathbf{x}}J^*\big)^\top\big(-G_v^\top\nabla_{\mathbf{x}}J^*\big)
=
\big(\nabla_{\mathbf{x}}J^*\big)^\top G_vG_v^\top \nabla_{\mathbf{x}}J^*.
\end{equation}
Second,
\begin{equation}\label{gradGusub_nopSL}
\big(\nabla_{\mathbf{x}}J^*\big)^\top G_v\mathbf{u}^*
=
\big(\nabla_{\mathbf{x}}J^*\big)^\top G_v\big(-G_v^\top\nabla_{\mathbf{x}}J^*\big)
=
-\big(\nabla_{\mathbf{x}}J^*\big)^\top G_vG_v^\top \nabla_{\mathbf{x}}J^*.
\end{equation}
Therefore the HJB equation will become,
\begin{align}\label{Phi_star_expand_nopSL}
0
&=
\frac12\,(\mathbf{u}^*)^\top \mathbf{u}^*
+\big(\nabla_{\mathbf{x}}J^*\big)^\top \mathbf{f}_v
+\big(\nabla_{\mathbf{x}}J^*\big)^\top G_v\mathbf{u}^* \nonumber\\
&=
\frac12\,\big(\nabla_{\mathbf{x}}J^*\big)^\top G_vG_v^\top \nabla_{\mathbf{x}}J^*
+\big(\nabla_{\mathbf{x}}J^*\big)^\top \mathbf{f}_v
-\big(\nabla_{\mathbf{x}}J^*\big)^\top G_vG_v^\top \nabla_{\mathbf{x}}J^*.
\end{align}

Finally, we obtain the reduced stationary HJB equation
\begin{equation}\label{HJB_reduced_nopSL}
0=
\big(\nabla_{\mathbf{x}}J^*\big)^\top \mathbf{f}_v
-\frac12\,\big(\nabla_{\mathbf{x}}J^*\big)^\top
G_vG_v^\top
\big(\nabla_{\mathbf{x}}J^*\big).
\end{equation}

From \eqref{HJB_reduced_nopSL}, we have
\begin{equation}\label{HJB_factor_grad_cleanGSL}
0=
\big(\nabla_{\mathbf{x}}J^*\big)^\top \mathbf{f}_v
-\frac12\,\big(\nabla_{\mathbf{x}}J^*\big)^\top
\Big(G_vG_v^\top\Big)
\big(\nabla_{\mathbf{x}}J^*\big)
=
\big(\nabla_{\mathbf{x}}J^*\big)^\top
\left(
\mathbf{f}_v
-\frac12\,\Big(G_vG_v^\top\Big)\big(\nabla_{\mathbf{x}}J^*\big)
\right).
\end{equation}

This yields two stationary candidate solutions for $\nabla_{\mathbf{x}}J^*$:
\begin{equation}\label{grad_candidates_cleanGSL}
\left(\nabla_{\mathbf{x}}J^*\right)_1=\mathbf{0},
\qquad
\left(\nabla_{\mathbf{x}}J^*\right)_2
=
2\,\Big(G_vG_v^\top\Big)^{-1}\mathbf{f}_v.
\end{equation}

From equation \eqref{unablainvertable}, the corresponding candidate controls are
\begin{equation}\label{u1veccandidate}
\mathbf{u}_1^*=-G_v^\top (\nabla_{\mathbf{x}}J^*)_1=-G_v^\top (0)=0
\end{equation}
and
\begin{equation}\label{u2veccandidate}
\mathbf{u}_2^*
=
-G_v^\top \left(\nabla_{\mathbf{x}}J^*\right)_2
=
-2\,G_v^\top \Big(G_vG_v^\top\Big)^{-1}\mathbf{f}_v.
\end{equation}

If $\mathbf{u}=0$, the dynamical system in \ref{xdotumatrix1} and \ref{xdotuvector} in the form of equation \ref{bigklvector} will make $K<0$ and $L<0$, which will form a Prisoner's Dilemma setting. Based on the lemma \ref{vectorequilibrium}, the system drifts toward defection; thus to satisfy optimal control problem constraint on the equation \ref{optproblemSL}, $\mathbf{u}\neq0$ should be satisfied. Therefore the $\nabla_{\mathbf{x}}J^*$ cannot be $\mathbf{0}$ since, based on equation \ref{u1veccandidate}, it will make $u=0$. Therefore, we use $\left(\nabla_{\mathbf{x}}J^*\right)_2
$, which corresponds to $-2\,G_v^\top \Big(G_vG_v^\top\Big)^{-1}\mathbf{f}_v$, which becomes the optimal control for the system. This optimal control will modify the dynamical equation \ref{xdotumatrix2} to become $\dot{\mathbf{x}}_{gs}=(\mathbf{f}_v+G_v\mathbf{u}_2^*)=(\mathbf{f}_v-2G_v\,G_v^\top \Big(G_vG_v^\top\Big)^{-1}\mathbf{f}_v)=\mathbf{f}_v-2\mathbf{f}_v=-\mathbf{f}_v=$ and if added with the conformity mechanism, the dynamical equation will become
\begin{equation}\label{optimizedxdotsinglay}
    \Tilde{\dot{\mathbf{x}}}=(A_1^*-I)\mathbf{x}+diag(A_2^*\left[ (-k) \mathbf{x} + (-l)(\mathds{1}-\mathbf{x})  \right]) \,diag(\mathds{1}-\mathbf{x})A_1^*\mathbf{x}.
\end{equation}
In the Prisoner's Dilemma setting, $k<0$ and $l<0$ imply $K>0$ and $L> 0$. Based on Lemma \ref{vectorequilibrium}, the system in equation \ref{xdotuvector} will move toward cooperation.
\end{proof}
\subsection{Optimal Incentive for Multi-Layer Networked System}
\begin{lemma}\label{multilayerequilibrium}
The dynamical system
\begin{equation}\label{bigklvectormulti}
\dot{\mathbf{x}}=(A_1^*-I)\mathbf{x}+diag(\boldsymbol{\beta}_m) ,diag(\mathds{1}-\mathbf{x})A_1^*\mathbf{x}.
\end{equation}
with $\boldsymbol{\beta}_m=A_2^*\left[ K\mathbf{x} + L(\mathds{1}-\mathbf{x}) \right]$ promotes cooperation if $K>0$ and $L>0$, and promotes defection under the Prisoner's Dilemma setting, where $K<0$ and $L<0$.
\end{lemma}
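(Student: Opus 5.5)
The plan mirrors the proof of Lemma~\ref{vectorequilibrium}, using a linear Lyapunov function weighted by the stationary distribution of the \emph{conformity} layer, because that is the only layer that enters the linear consensus term. Let $\pi_1\succ 0$ be the left Perron eigenvector of $A_1^*$ for eigenvalue $1$, so $\pi_1^\top A_1^*=\pi_1^\top$. Its existence and strict positivity follow from the assumptions that every node has at least one neighbour in each layer, that self-loops are included, and that the layer is connected, which makes $A_1^*$ irreducible, primitive and row stochastic. For the cooperative equilibrium $\mathbf{x}=\mathds{1}$ I take $V=\pi_1^\top(\mathds{1}-\mathbf{x})$, and for the defective equilibrium $\mathbf{x}=\mathbf{0}$ I take $V=\pi_1^\top\mathbf{x}$. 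On the box $[0,1]^N$ both functions are nonnegative and vanish only at the respective equilibrium.

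First I differentiate along \eqref{bigklvectormulti}. The conformity contribution is $\pi_1^\top(A_1^*-I)\mathbf{x}=0$, and this is exactly why the weight must be taken from layer~1 and not layer~2. What remains is
\[
\dot V=\mp\,\pi_1^\top \operatorname{diag}(\boldsymbol{\beta}_m)\operatorname{diag}(\mathds{1}-\mathbf{x})A_1^*\mathbf{x}=\mp\sum_i \pi_{1,i}\,\beta_{m,i}\,(1-x_i)\,(A_1^*\mathbf{x})_i .
\]
Second I sign $\boldsymbol{\beta}_m$. For $\mathbf{x}\in[0,1]^N$ the vector $K\mathbf{x}+L(\mathds{1}-\mathbf{x})$ has entries that are convex combinations of $K$ and $L$. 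These entries are therefore strictly positive when $K,L>0$ and strictly negative when $K,L<0$. Because $A_2^*$ is row stochastic with nonnegative entries, $\boldsymbol{\beta}_m$ inherits the same strict sign entrywise. The entries of $\pi_1$, $\mathds{1}-\mathbf{x}$ and $A_1^*\mathbf{x}$ are all nonnegative, so every summand has the sign of $\beta_{m,i}$. This gives $\dot V\le 0$ in the case $K,L>0$, where the minus sign applies to $V=\pi_1^\top(\mathds{1}-\mathbf{x})$, and also in the case $K,L<0$, where the plus sign applies to $V=\pi_1^\top\mathbf{x}$. Invariance of $[0,1]^N$ follows from the vector field: at $x_i=0$ the game term is zero and the conformity term is nonnegative, and at $x_i=1$ the conformity term is nonpositive.

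The main obstacle is upgrading $\dot V\le 0$ to convergence, since $\dot V$ vanishes whenever $(1-x_i)(A_1^*\mathbf{x})_i=0$ for all $i$. I would apply LaSalle's invariance principle on the compact invariant set $[0,1]^N$. In the cooperation case, $\dot V=0$ forces, for each $i$, either $x_i=1$ or $(A_1^*\mathbf{x})_i=0$. Using the conformity term and connectivity of layer~1, I would argue that the largest invariant subset of this set is $\{\mathds{1}\}\cup\{\mathbf{0}\}$. So every trajectory starting with $\mathbf{x}_0\neq\mathbf{0}$ converges to $\mathds{1}$, and the symmetric argument handles defection, with $\mathds{1}$ as the only other invariant point. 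Since $\mathbf{0}$ and $\mathds{1}$ are both equilibria for any $K,L$, the statement should be read as convergence from every nontrivial initial condition, i.e.\ the sense in which Lemma~\ref{vectorequilibrium} says the system ``promotes'' cooperation or defection.
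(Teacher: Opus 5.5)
Your route is the paper's: the same linear Lyapunov functions weighted by the left eigenvector $\pi_1$ of $A_1^*$, the same cancellation $\pi_1^\top(A_1^*-I)\mathbf{x}=0$, and the same entrywise sign argument driven by $\boldsymbol{\beta}_m$. The LaSalle step, together with your remark that $\mathbf{0}$ and $\mathds{1}$ are equilibria for all $K,L$, improves on the paper, which passes from $\dot V\le 0$ directly to asymptotic stability. But the step everything rests on, forward invariance of $[0,1]^N$, is argued incorrectly. The $i$th game term is $\beta_{m,i}(1-x_i)(A_1^*\mathbf{x})_i$. It vanishes at $x_i=1$ but not at $x_i=0$, because the replicator factor $x_i$ has been replaced by the neighbourhood average $(A_1^*\mathbf{x})_i$, which involves other nodes. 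On the face $x_i=0$ you actually get $\dot x_i=(1+\beta_{m,i})(A_1^*\mathbf{x})_i$. For $K,L>0$ this is nonnegative, so the cooperation half goes through once you use this computation instead.

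For the defection half it does not. On the box $\beta_{m,i}\ge\min(K,L)$, so invariance holds if $K,L\ge-1$ but fails otherwise. Take a node with $x_i=0$ whose layer-2 neighbourhood is entirely defecting, so $\beta_{m,i}=L$, but which has a layer-1 neighbour with $x_j>0$. Then $\dot x_i=(1+L)(A_1^*\mathbf{x})_i<0$ whenever $L<-1$. Taking $x_j=\varepsilon$ small gives the same conclusion even when $A_1^*=A_2^*$. This is exactly the regime of the paper's own example $k=-1$, $l=-1.5$.

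Once the state leaves the box, several things break:
\begin{itemize}
\item $V=\pi_1^\top\mathbf{x}$ is no longer sign-definite;
\item the entries of $K\mathbf{x}+L(\mathds{1}-\mathbf{x})$ are no longer convex combinations of $K$ and $L$;
\item $A_1^*\mathbf{x}$ need not be nonnegative;
\item LaSalle on $[0,1]^N$ is unavailable.
\end{itemize}
So the defection claim is unproved. You need either the extra hypothesis $K,L\ge-1$, under which your argument works verbatim, or a separate argument at $\mathbf{0}$, e.g.\ a local one via the linearisation $\dot{\mathbf{x}}\approx\big((1+L)A_1^*-I\big)\mathbf{x}$. The paper's proof has the same hole, since it simply works inside $[0,1]^N$ on the strength of its earlier cited boundedness claim. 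The difference is that your write-up asserts invariance with a false justification.

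A minor point: $\pi_1\succ0$ needs neither Perron--Frobenius nor connectivity, because symmetry of $A_1$ gives $\pi_1\propto A_1\mathds{1}$. Connectivity of layer~1, which the paper never assumes, is needed only to reduce $\{\dot V=0\}$ to $\{\mathbf{0},\mathds{1}\}$.
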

\begin{proof}
To establish the asymptotic stability of the equilibrium $\mathbf{x}=\mathds{1}$ ($\mathbf{x}=\mathbf{0}$), consider the Lyapunov function $V=\pi^T(\mathds{1}-\mathbf{x})$ ($V=\pi^T\mathbf{x}$), where $\pi$ is the left eigenvector of $A_1^*$ associated with the eigenvalue $1$, satisfying $\pi^T A_1^*=\pi^T$. The function $V$ is positive definite and equals zero at $\mathbf{x}=\mathds{1}$ ($\mathbf{x}=\mathbf{0}$). The time derivative is given by $\dot{V}=-\pi^T\dot{\mathbf{x}}$ ($\dot{V}=\pi^T\dot{\mathbf{x}}$). Substituting the system dynamics yields $\dot{V}=-\pi^T\left((A_1^*-I)\mathbf{x}+diag(\boldsymbol{\beta}),diag(\mathds{1}-\mathbf{x})A_1^*\mathbf{x}\right)$ ($\dot{V}=\pi^T\left((A_1^*-I)\mathbf{x}+diag(\boldsymbol{\beta}),diag(\mathds{1}-\mathbf{x})A_1^*\mathbf{x}\right)$).

The conformity term satisfies $\pi^T(A_1^*-I)\mathbf{x}=0$ since $\pi^T A_1^*=\pi^T$, causing the conformity contribution to vanish. If $K>0$ and $L>0$ ($K<0$ and $L<0$), then $\boldsymbol{\beta}\succ0$ ($\boldsymbol{\beta}\prec0$). Since $\pi\succ0$, $\mathds{1}-\mathbf{x}\succeq0$, and $A_1^*\mathbf{x}\succeq0$, the sign of $\dot{V}$ is determined by $\boldsymbol{\beta}$. Therefore, if $\boldsymbol{\beta}\succ0$ ($\boldsymbol{\beta}\prec0$), every term in the summation is nonpositive, yielding $\dot{V}\leq0$. It follows that $K>0$ and $L>0$ ($K<0$ and $L<0$) promote cooperation (defection), with the equilibrium $\mathbf{x}=\mathds{1}$ ($\mathbf{x}=\mathbf{0}$) being asymptotically stable.
\end{proof}

\begin{theorem}\label{optmultiinver}
Consider the optimal control problem associated with the Prisoner's Dilemma ($k<0$ and $l<0$) in the multilayer dynamical system given by equation \ref{xdotumatrix2}. If $\Big(G_mG_m^\top\Big)$ is invertible, the control input that minimises the performance index $J
=\int_{0}^{t_f}\frac{1}{2}\,\mathbf{u}^{\top}\mathbf{u}\,dt,$, subject to the terminal cooperation constraint $\mathbf{x}(t_f)=1-\theta$ and free terminal time, is given by $\mathbf{u}^*=-2\,G_m^\top \Big(G_mG_m^\top\Big)^{-1}\mathbf{f}_m$.
\end{theorem}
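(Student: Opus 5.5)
The argument will follow the proof of Theorem \ref{optinvertible}, with $\mathbf{f}_v,G_v$ replaced by $\mathbf{f}_m,G_m$ and Lemma \ref{multilayerequilibrium} in place of Lemma \ref{vectorequilibrium}. First I state the optimal control problem: minimise $J=\int_0^{t_f}\frac12\mathbf{u}^\top\mathbf{u}\,dt$ subject to $\dot{\mathbf{x}}_{gm}=\mathbf{f}_m+G_m\mathbf{u}$, $\mathbf{x}(0)=\mathbf{x}_0$, $\mathbf{x}(t_f)=\mathds{1}-\boldsymbol{\theta}$, with $t_f$ free. I then form the Hamiltonian
\[
\mathcal{H}=\tfrac12\mathbf{u}^\top\mathbf{u}+(\nabla_{\mathbf{x}}J^*)^\top(\mathbf{f}_m+G_m\mathbf{u}).
\]
Since $\mathcal{H}$ is strictly convex in $\mathbf{u}$, the stationarity condition $\partial\mathcal{H}/\partial\mathbf{u}=0$ gives the unique minimiser $\mathbf{u}^*=-G_m^\top\nabla_{\mathbf{x}}J^*$.

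Next I impose the stationary HJB equation. Free terminal time and a time-invariant system justify $\partial J^*/\partial t=0$. Substituting $\mathbf{u}^*$ gives
\[
0=(\nabla_{\mathbf{x}}J^*)^\top\mathbf{f}_m-\tfrac12(\nabla_{\mathbf{x}}J^*)^\top G_mG_m^\top\nabla_{\mathbf{x}}J^*,
\]
which factors as $(\nabla_{\mathbf{x}}J^*)^\top\big(\mathbf{f}_m-\tfrac12 G_mG_m^\top\nabla_{\mathbf{x}}J^*\big)=0$. Setting each factor to zero gives two candidates. The first is $\nabla_{\mathbf{x}}J^*=\mathbf{0}$. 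For the second, the invertibility hypothesis on $G_mG_m^\top$ gives $\nabla_{\mathbf{x}}J^*=2(G_mG_m^\top)^{-1}\mathbf{f}_m$. The corresponding controls are $\mathbf{u}_1^*=\mathbf{0}$ and $\mathbf{u}_2^*=-2G_m^\top(G_mG_m^\top)^{-1}\mathbf{f}_m$.

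To select between them, I use Lemma \ref{multilayerequilibrium}. With $\mathbf{u}=\mathbf{0}$ the system has the form of \eqref{bigklvectormulti} with $K=k<0$ and $L=l<0$, so it is driven to $\mathbf{x}=\mathbf{0}$. The terminal constraint $\mathbf{x}(t_f)=\mathds{1}-\boldsymbol{\theta}$ therefore cannot be met, which rules out $\mathbf{u}_1^*$. For $\mathbf{u}_2^*$, invertibility gives $G_m\mathbf{u}_2^*=-2\mathbf{f}_m$, so $\dot{\mathbf{x}}_{gm}=-\mathbf{f}_m$. This is exactly the multilayer game term with $k,l$ replaced by $-k,-l$. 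Adding back the conformity term $(A_1^*-I)\mathbf{x}$ recovers \eqref{bigklvectormulti} with $K=-k>0$ and $L=-l>0$. Lemma \ref{multilayerequilibrium} then shows that the controlled system moves toward $\mathds{1}$, consistent with the terminal constraint.

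The main obstacle is the selection step together with the claim of optimality: the stationary HJB only produces candidates. Also, the gradient field $2(G_mG_m^\top)^{-1}\mathbf{f}_m$ need not be integrable to a genuine value function $J^*$. I would treat the result, as in Theorems \ref{optscalar} and \ref{optinvertible}, as a stationary-HJB candidate feedback, and lean on the reachability argument to exclude $\mathbf{u}_1^*$.

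A second point needs checking: invertibility of $G_mG_m^\top$ requires $\mathbf{g}_m\succ 0$, that is $0<x_i<1$ and $A_1^*\mathbf{x}\succ0$, together with $A_2^*$ nonsingular. I would state this as the region of validity, away from the boundary reached at $t_f$, since $\boldsymbol{\theta}\succ0$.
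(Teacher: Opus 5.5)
Your route is the paper's proof step for step: the same Hamiltonian and stationarity condition $\mathbf{u}^*=-G_m^\top\nabla_{\mathbf{x}}J^*$, the same reduced stationary HJB and factorisation, exclusion of $\nabla_{\mathbf{x}}J^*=\mathbf{0}$ via Lemma~\ref{multilayerequilibrium}, and the closed loop $-\mathbf{f}_m$ with conformity added back. Your justification of $\partial J^*/\partial t=0$ and your integrability caveat are sharper than what the paper writes. However, the step ``setting each factor to zero gives two candidates'' is a genuine gap, shared with the paper, and your selection argument rests on it. For $N\ge 2$ the reduced HJB is a single scalar equation in the $N$ unknowns $\mathbf{p}=\nabla_{\mathbf{x}}J^*$, not a product of two scalars. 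With $M=(G_mG_m^\top)^{-1}$ it is equivalent to $(\mathbf{p}-M\mathbf{f}_m)^\top M^{-1}(\mathbf{p}-M\mathbf{f}_m)=\mathbf{f}_m^\top M\mathbf{f}_m$. This is an ellipsoid, and $\mathbf{0}$ and $2M\mathbf{f}_m$ are just two antipodal points on it. Ruling out $\mathbf{p}=\mathbf{0}$ therefore does not single out $\mathbf{u}_2^*$. The value function has to be fixed by the boundary condition $J^*(\mathds{1}-\boldsymbol{\theta})=0$ and by $\mathbf{p}$ being a genuine gradient.

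A simplification you missed shows that the candidate generally fails both requirements. Because $G_m$ is square, invertibility of $G_mG_m^\top$ means $G_m$ itself is invertible. Hence $\mathbf{u}_2^*=-2G_m^{-1}\mathbf{f}_m=-2\big(k\mathbf{x}+l(\mathds{1}-\mathbf{x})\big)$, which is the scalar law of Theorem~\ref{optscalar} applied to each node's own state; the feedback law involves neither $A_1^*$ nor $A_2^*$. The closed loop $\dot{\mathbf{x}}=-\mathbf{f}_m$ is then a fixed flow. For $N\ge 2$ its trajectory from $\mathbf{x}_0$ generically misses the prescribed point $\mathds{1}-\boldsymbol{\theta}$, so $\mathbf{u}_2^*$ need not even be admissible. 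Moreover, $2M\mathbf{f}_m=2G_m^{-\top}\big(k\mathbf{x}+l(\mathds{1}-\mathbf{x})\big)$ is not curl-free in general, so no $J^*$ has it as its gradient. Separately, the problem is posed for $\dot{\mathbf{x}}_{gm}$ without conformity, so re-inserting $(A_1^*-I)\mathbf{x}$ to invoke the lemma changes the system. For the stated problem it suffices to note directly that $\dot{x}_i=-g_{m,i}\big(A_2^*(k\mathbf{x}+l(\mathds{1}-\mathbf{x}))\big)_i\ge 0$. What your argument, like the paper's, actually proves is that $\mathbf{u}_2^*$ solves the stationary HJB pointwise and steers toward cooperation. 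The minimisation claim needs more: either a target-set terminal condition such as $x_i\ge 1-\theta_i$ for all $i$ (as in the simulations) together with a verification argument, or structure such as $\mathbf{f}_m=G_mG_m^\top\nabla U$ for a potential $U$ with $\mathds{1}-\boldsymbol{\theta}$ lying on the reversed trajectory.
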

\begin{proof}
The form of optimal control will become
\begin{equation}\label{optconML}
\min_{\mathbf{u}}\quad
J
=\int_{0}^{t_f}\frac{1}{2}\,\mathbf{u}^{\top}\mathbf{u}\,dt,
\end{equation}
\begin{equation}
\text{s.t.}\;
\begin{cases}
\dot{\mathbf{x}}_{gm}=\mathbf{f}_m+G_m\mathbf{u},\\
\mathbf{x}(0)=\mathbf{x}_0,\\[2pt]
\mathbf{x}(t_f)=\mathds{1}-\boldsymbol{\theta}.
\end{cases}
\end{equation}

Assume a stationary value function $J^*(\mathbf{x})$ so that $\frac{\partial J^*}{\partial t}=0$.

\begin{equation}
\mathcal{H}\!
=
\frac12\,\mathbf{u}^\top \mathbf{u}
+(\nabla_{\mathbf{x}}J^*)^\top\left(\mathbf{f}_m+G_m\mathbf{u}\right).
\end{equation}

The stationarity condition with respect to $\mathbf{u}$ is
\begin{equation}
\frac{\partial \mathcal{H}}{\partial \mathbf{u}}
=
\mathbf{u}+G_m^\top \nabla_{\mathbf{x}}J^*
.
\end{equation}

Then the necessary condition that the optimal control must satisfy is $\frac{\partial \mathcal{H}}{\partial \mathbf{u}}=\mathbf{u}+\nabla_{\mathbf{x}}J^*(\mathbf{x})^{\top}\mathbf{h}_s(x)=0$. From there we found that optimal control $\mathbf{u}^*$ is
\begin{equation}\label{unablamultilayer}
\mathbf{u}^*=-G_m^\top \nabla_{\mathbf{x}}J^*.
\end{equation}

Let $J^*(\mathbf{x},t)$ denote the value function. The Hamilton--Jacobi--Bellman equation is
\begin{equation}\label{HJB_full_timeML}
0=
\frac{\partial J^*(\mathbf{x},t)}{\partial t}
+\mathcal{H}\!\left(\mathbf{x},\mathbf{u},t,\nabla_{\mathbf{x}}J^*\right),
\end{equation}
or equivalently,
\begin{equation}\label{HJB_full_time_expandedML}
0=
\frac{\partial J^*(\mathbf{x},t)}{\partial t}
+
\frac{1}{2}\,\mathbf{u}^{*\top} \mathbf{u}^*
+\big(\nabla_{\mathbf{x}}J^*(\mathbf{x},t)\big)^\top\!\left(\mathbf{f}_m+G_m\mathbf{u}^*\right).
\end{equation}

Assume a stationary value function $J^*(\mathbf{x})$ does not explicitly depend on $t$, so that
\begin{equation}\label{Jt_zeroML}
\frac{\partial J^*(\mathbf{x},t)}{\partial t}=0.
\end{equation}
Then \eqref{HJB_full_time_expandedML} reduces to the stationary HJB equation
\begin{equation}\label{HJB_stationary_cleanML}
0=
\frac{1}{2}\,\mathbf{u}^{*\top} \mathbf{u}^*
+\big(\nabla_{\mathbf{x}}J^*(\mathbf{x},t)\big)^\top\!(\mathbf{f}_m+G_m\mathbf{u}^*).
\end{equation}

We now substitute $\mathbf{u}^*$ based on equation \ref{unablamultilayer}. First,
\begin{equation}\label{uTu_sub_nopML}
(\mathbf{u}^*)^\top\mathbf{u}^*
=
\big(-G_m^\top\nabla_{\mathbf{x}}J^*\big)^\top\big(-G_m^\top\nabla_{\mathbf{x}}J^*\big)
=
\big(\nabla_{\mathbf{x}}J^*\big)^\top G_mG_m^\top \nabla_{\mathbf{x}}J^*.
\end{equation}
Second,
\begin{equation}\label{gradGusub_nopML}
\big(\nabla_{\mathbf{x}}J^*\big)^\top G_m\mathbf{u}^*
=
\big(\nabla_{\mathbf{x}}J^*\big)^\top G_m\big(-G_m^\top\nabla_{\mathbf{x}}J^*\big)
=
-\big(\nabla_{\mathbf{x}}J^*\big)^\top G_mG_m^\top \nabla_{\mathbf{x}}J^*.
\end{equation}
Therefore the HJB equation will become,
\begin{align}\label{Phi_star_expand_nopML}
0
&=
\frac12\,(\mathbf{u}^*)^\top \mathbf{u}^*
+\big(\nabla_{\mathbf{x}}J^*\big)^\top \mathbf{f}_m
+\big(\nabla_{\mathbf{x}}J^*\big)^\top G_m\mathbf{u}^* \nonumber\\
&=
\frac12\,\big(\nabla_{\mathbf{x}}J^*\big)^\top G_mG_m^\top \nabla_{\mathbf{x}}J^*
+\big(\nabla_{\mathbf{x}}J^*\big)^\top \mathbf{f}_m
-\big(\nabla_{\mathbf{x}}J^*\big)^\top G_mG_m^\top \nabla_{\mathbf{x}}J^* .
\end{align}

Finally, we obtain the reduced stationary HJB equation
\begin{equation}\label{HJB_reduced_nopML}
0=
\big(\nabla_{\mathbf{x}}J^*\big)^\top \mathbf{f}_m
-\frac12\,\big(\nabla_{\mathbf{x}}J^*\big)^\top
G_mG_m^\top
\big(\nabla_{\mathbf{x}}J^*\big).
\end{equation}

From \eqref{HJB_reduced_nopML}, we have
\begin{equation}\label{HJB_factor_grad_cleanGML}
0=
\big(\nabla_{\mathbf{x}}J^*\big)^\top \mathbf{f}_m
-\frac12\,\big(\nabla_{\mathbf{x}}J^*\big)^\top
\Big(G_mG_m^\top\Big)
\big(\nabla_{\mathbf{x}}J^*\big)
=
\big(\nabla_{\mathbf{x}}J^*\big)^\top
\left(
\mathbf{f}_m
-\frac12\,\Big(G_mG_m^\top\Big)\big(\nabla_{\mathbf{x}}J^*\big)
\right).
\end{equation}

This yields two stationary candidate solutions for $\nabla_{\mathbf{x}}J^*$:
\begin{equation}\label{grad_candidates_cleanGML}
\left(\nabla_{\mathbf{x}}J^*\right)_1=\mathbf{0},
\qquad
\left(\nabla_{\mathbf{x}}J^*\right)_2
=
2\,\Big(G_mG_m^\top\Big)^{-1}\mathbf{f}_m.
\end{equation}

From equation \eqref{unablamultilayer}, the corresponding candidate controls are
\begin{equation}\label{u1veccandidateMul}
\mathbf{u}_1^*=-G_m^\top (\nabla_{\mathbf{x}}J^*)_1=-G_m^\top (0)=0
\end{equation}
and
\begin{equation}\label{u2veccandidateMul}
\mathbf{u}_2^*
=
-G_m^\top \left(\nabla_{\mathbf{x}}J^*\right)_2
=
-2\,G_m^\top \Big(G_mG_m^\top\Big)^{-1}\mathbf{f}_m.
\end{equation}

If $\mathbf{u}=0$, the dynamical system in \ref{xdotumatrix2} and \ref{xdotmultilayer} in the form of equation \ref{bigklvectormulti} will make $K<0$ and $L<0$, which will form a Prisoner's Dilemma setting. Based on the lemma \ref{multilayerequilibrium}, the system drifts toward defection; thus, to satisfy the constraint from the optimal problem on equation \ref{optconML}, $\mathbf{u}\neq0$ should be satisfied. Therefore the $\nabla_{\mathbf{x}}J^*$ cannot be $\mathbf{0}$ since, based on equation \ref{u1veccandidateMul}, it will make $u=0$. Therefore, we use $\left(\nabla_{\mathbf{x}}J^*\right)_2
$, which corresponds to $-2\,G_m^\top \Big(G_mG_m^\top\Big)^{-1}\mathbf{f}_m,$, which becomes the optimal control for the system. This optimal control will modify the dynamical equation \ref{xdotumatrix2} to become $\dot{\mathbf{x}}_{gm}=(\mathbf{f}_m+G_m\mathbf{u}_2^*)=(\mathbf{f}_m-2G_m\,G_m^\top \Big(G_mG_m^\top\Big)^{-1}\mathbf{f}_m)=\mathbf{f}_m-2\mathbf{f}_m=-\mathbf{f}_m$ and if added by the conformity part as dampening to become
\begin{equation}\label{optimizedxdotmulay}
    \dot{\mathbf{x}}=(A_1^*-I)\mathbf{x}+diag(A_2^*\left[ (-k) \mathbf{x} + (-l)(\mathds{1}-\mathbf{x})  \right]) \,diag(\mathds{1}-\mathbf{x})A_1^*\mathbf{x}.
\end{equation}
In the Prisoner's Dilemma setting, $k<0$ and $l<0$ imply $K>0$ and $L> 0$. Based on Lemma \ref{multilayerequilibrium}, the system in equation \ref{xdotmultilayer} will move toward cooperation.
\end{proof}
\section{Numerical Simulations}

This section presents numerical simulations to validate the analytical results derived in the previous section. The scalar system simulations assume a well-mixed population. The networked system simulations use 10-node networks with various structures, including Erdős-Rényi, Small World, and star networks (Figure \ref{fig:graphplot}). The multilayer system simulations consider combinations of these network structures.

\begin{figure}
\centering
	\includegraphics[width=.5\textwidth]{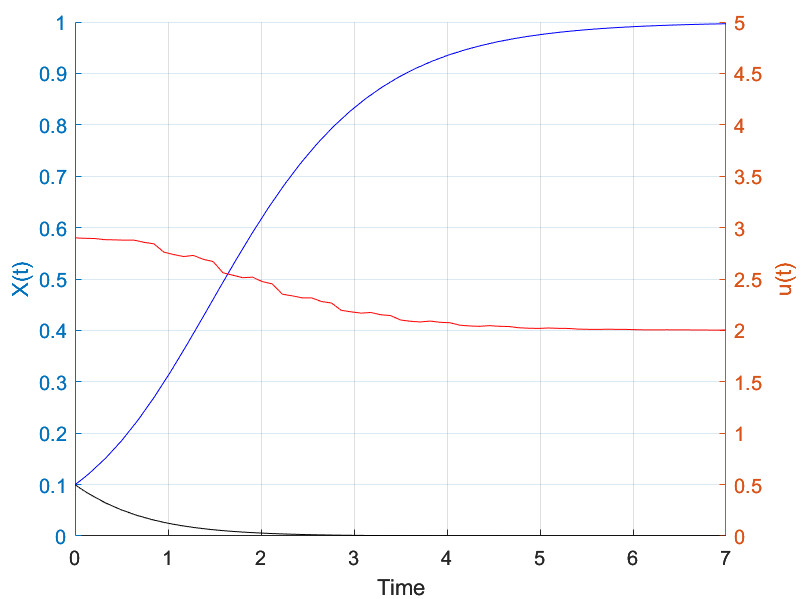}
	\caption[Simulation of Optimal Incentive in Well-mixed System]{Optimal control $=-2\big(kx+l(1-x)\big)$ (equation \ref{uscalar}) for a well-mixed population, illustrating the results of Lemma \ref{scalarequilibrium} and Theorem \ref{optscalar}. The payoff matrix is 
    $\mathcal{P}=\begin{bmatrix}
2 & 0\\
3 & 1.5\\
\end{bmatrix}$. The left vertical axis represents the cooperation level $x$ for both the uncontrolled Prisoner's Dilemma dynamics, which converge to defection, and the incentive-controlled dynamics, which converge to cooperation. The right vertical axis represents the incentive level corresponding to the black curve.}
	\label{fig:wellmix}
\end{figure}

The figure \ref{fig:wellmix} illustrates the evolution of cooperation in the Prisoner's Dilemma under the scalar dynamical system. It compares the uncontrolled dynamics, which converge to defection, with the incentivised dynamics, which converge to full cooperation. These results support Lemma \ref{scalarequilibrium} and Theorem \ref{optscalar}. The figure also shows the evolution of the incentive $u$ over time. As predicted by the analytical solution, the incentive adapts dynamically to the current fraction of cooperators in the population.

\begin{figure}
	\centering
	\includegraphics[width=1\textwidth]{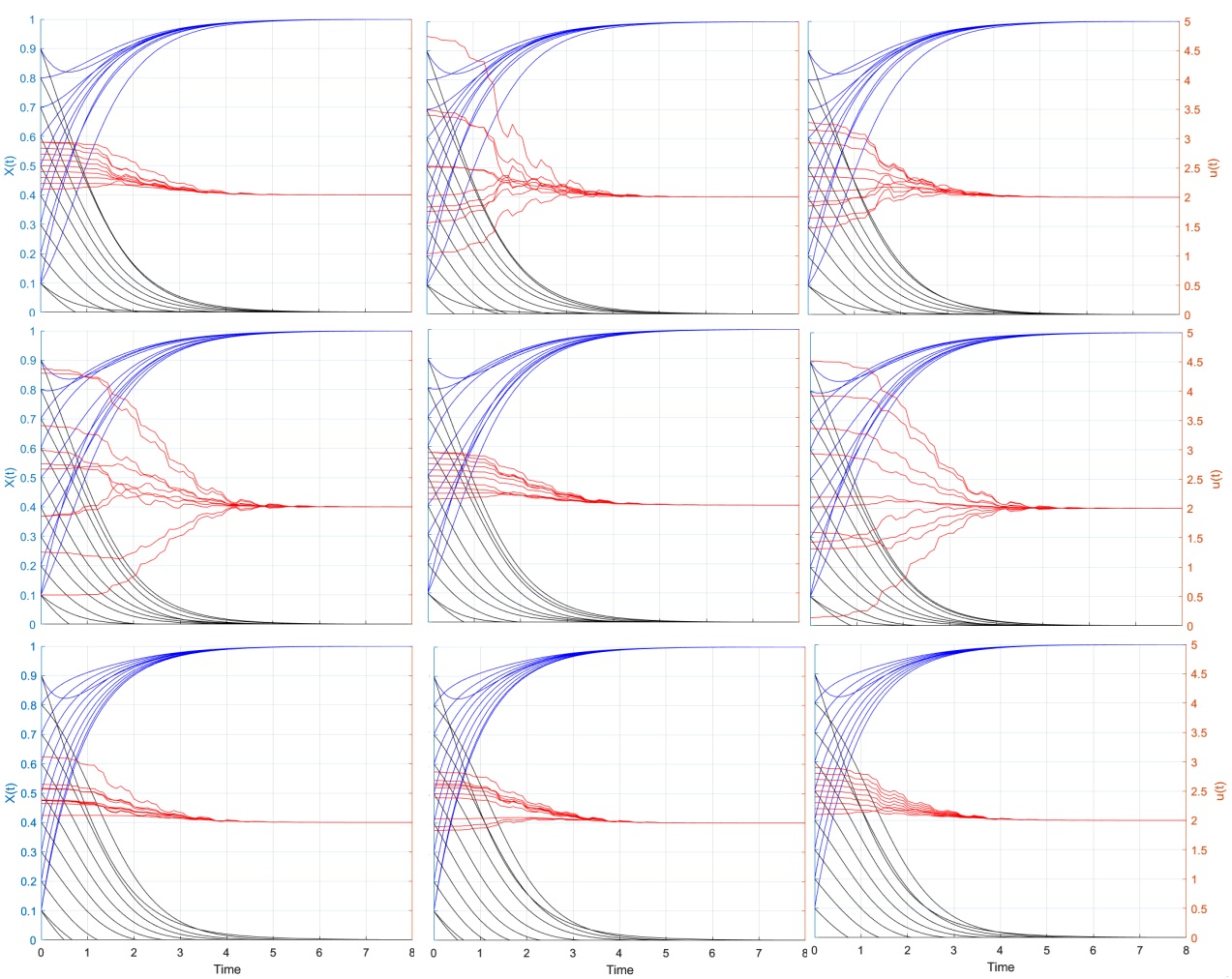}
	\caption[Simulation for Optimal Incentive in System with Single-layer and Multi-layer Network]{Optimal control for single-layer and multilayer network configurations. The payoff matrix is $\mathcal{P}=\begin{bmatrix}
2 & 0\\
3 & 1.5\\
\end{bmatrix}$. The left vertical axis represents the cooperation intensities of the ten nodes, with initial condition $\mathbf{x}=\left[0.1~0.1~0.2~0.3~0.4~0.5~0.6~0.7~0.8~0.9\right]$, for both the uncontrolled Prisoner's Dilemma dynamics that converge to defection (black lines) and the incentive-controlled dynamics that converge to cooperation (blue lines), consistent with Lemmas \ref{vectorequilibrium} and \ref{multilayerequilibrium}. The right vertical axis represents the corresponding incentive levels. The right vertical axis represents the incentive level corresponding to the red curves. For the single-layer system, the incentive is given by $\mathbf{u}^*=-2G_v^\top \Big(G_vG_v^\top\Big)^{-1}\mathbf{f}_v$ (equation \ref{u2veccandidate}, Theorem \ref{optinvertible}), whereas for the multilayer system it is given by $\mathbf{u}^*=-2G_m^\top \Big(G_mG_m^\top\Big)^{-1}\mathbf{f}_m$ (equation \ref{u2veccandidateMul}, Theorem \ref{optmultiinver}). Each row and column corresponds to a different network structure in the following order: Erdős-Rényi, Small-World, and Star. Consequently, the diagonal panels represent single-layer networks ($A_1^*=A_2^*$) with Erdős-Rényi, Small-World, and Star topologies, respectively, whereas the off-diagonal panels represent multilayer network combinations}
	\label{fig:multilayer}
\end{figure}

Figure \ref{fig:multilayer} compares the implementation of the optimal incentive in both single-layer and multilayer networked systems. The $3\times3$ panel presents different combinations of network structures. Similar to Figure \ref{fig:wellmix}, each subplot shows the system trajectories without incentive, the trajectories with incentive, and the corresponding incentive levels, which vary dynamically over time and across nodes. These results validate Lemma \ref{vectorequilibrium} and Theorem \ref{optinvertible} for the single-layer system, and Lemma \ref{multilayerequilibrium} and Theorem \ref{optmultiinver} for the multilayer system. Consistent with the well-mixed case, the Prisoner's Dilemma drives the population toward defection in the absence of incentives, whereas the proposed incentive mechanism promotes cooperation. The results also show that incentives are allocated optimally across the network, with node-specific values determined by the network topology. Furthermore, for a fixed topology, different initial conditions lead to different incentive profiles and system trajectories over time.

The first row and column of Figure \ref{fig:multilayer} correspond to the Erdős-Rényi network, the second row and column to the Small-World network, and the third row and column to the Star network. Consequently, the diagonal subplots represent single-layer network models, where the game interaction and information diffusion layers share the same network structure. From the upper-left to the lower-right corner, these correspond to the Erdős-Rényi, Small-World, and Star networks, respectively.

The off-diagonal subplots in Figure \ref{fig:multilayer} represent multilayer network configurations, as described in the previous paragraph. For example, the first row uses the Erdős-Rényi network as the first layer, while the second layer is either a Small-World or a Star network. The results show that the network structure of both layers influences the incentive allocation over time. As established in Theorem \ref{optmultiinver}, the optimal incentive strategy promotes cooperation throughout the population. The incentive calculation depends on the initial conditions, the diffusion-layer network $A_1^*$, the game-layer network $A^*_2$, and the payoff matrix. Figure \ref{fig:multilayer} further shows that, under similar initial conditions and payoff matrices, different multilayer network configurations produce different incentive trajectories while still achieving full cooperation.

\begin{figure}
    \centering
    \includegraphics[width=0.8\linewidth]{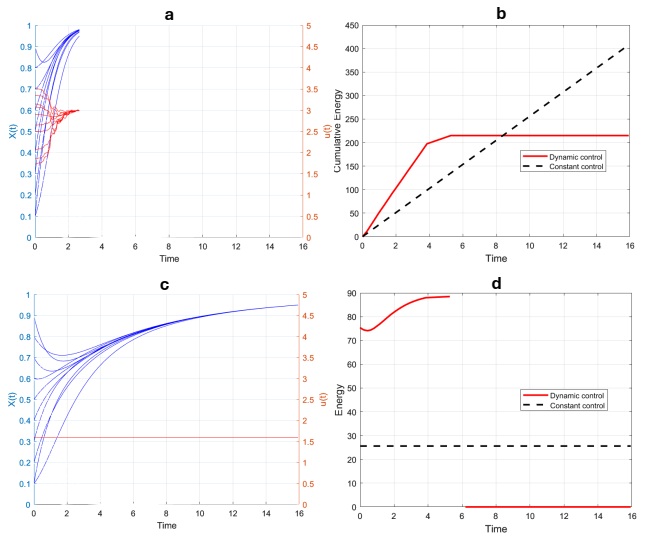}
    \caption[Total Energy Comparison of Optimal Dynamic and Constant Incentive]{Comparison of the cooperation trajectories and incentive level obtained using the optimal control $\mathbf{u}^*=-2G_m^\top \Big(G_mG_m^\top\Big)^{-1}\mathbf{f}_m$ (equation \ref{u2veccandidateMul}, Theorem \ref{optmultiinver}) (panel a) and a constant control input (panel c). Panels (b) and (d) compare the all nodes cost for each time and the cumulative, respectively. In panels (a) and (c), the left vertical axis represents the cooperation intensity $x_i$ (blue lines), while the right vertical axis represents the incentive level $u_i$ (red lines). In panel (b), the vertical axis represents the cumulative incentive energy $J=\int_{0}^{t_f}\frac{1}{2}\,\mathbf{u}^{\top}\mathbf{u}\,dt$, whereas in panel (d) it represents the instantaneous incentive energy $\mathbf{u}(t)^{\top}\mathbf{u}(t)$ for all nodes. Both quantities are shown for the optimal and constant control strategies. The payoff matrix is $\mathcal{P}=\begin{bmatrix}
2 & 0\\
3 & 1.5\\
\end{bmatrix}$ which implies $k=-1$ and $l=-1.5$. The constant control input of $1.6$ is selected because it is sufficiently small while still making the payoff difference positive, thereby transforming the Prisoner's Dilemma into a Harmony Game. The first layer is an Erdős-Rényi network, and the second layer is a Star network (Figure \ref{fig:graphplot}(b)) The simulation will stop when $x_i \geq 0.95$ for all nodes $i$.}
    \label{fig:energycompare}
\end{figure}

Figure \ref{fig:energycompare} compares the cumulative incentive energy obtained using the optimal control law $\mathbf{u}^*=-2G_m^\top \Big(G_mG_m^\top\Big)^{-1}\mathbf{f}_m$ from equation \ref{u2veccandidateMul} and Theorem \ref{optmultiinver} with that obtained using a constant incentive applied to all nodes throughout the simulation. The results show that, over time, the proposed optimal control strategy is less energy expensive than the constant control strategy, thereby supporting the result established in Theorem \ref{optmultiinver}.

\begin{figure}
	\centering
	\includegraphics[width=1\textwidth]{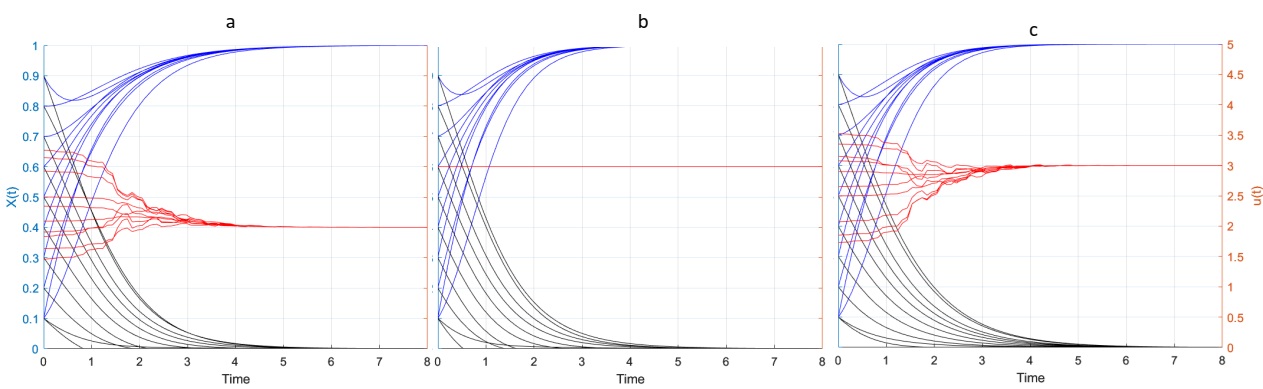}
	\caption[Optimal Incentive Comparison Between Different Payoff Matrix]{Optimal incentive $\mathbf{u}^*=-2G_v^\top \Big(G_vG_v^\top\Big)^{-1}\mathbf{f}_v$ (equation \ref{u2veccandidate}, Theorem \ref{optinvertible}) for different values of $k$ and $l$ in a single-layer network. From left to right, the payoff matrices are $\mathcal{P}=\begin{bmatrix}
2 & 0\\
3 & 1.5\\
\end{bmatrix}$, $\begin{bmatrix}
2 & 0\\
3.5 & 1.5\\
\end{bmatrix}$, and $\begin{bmatrix}
2 & 0\\
3.5 & 1\\
\end{bmatrix}$, corresponding to the cases $k>l$, $k=l$, and $k<l$, respectively. The initial conditions, axes, and legends are the same as those in Figure \ref{fig:multilayer}.}
	\label{fig:comparegame}
\end{figure}

Figure \ref{fig:comparegame} presents simulations on single-layer networks under different payoff matrices. In the Prisoner's Dilemma, both $k<0$ and $l<0$. The results show that when $k>l$ (left panel), the incentive decreases over time as the system approaches full cooperation. In contrast, when $k<l$ (right panel), the incentive increases over time to drive the system toward the target terminal state. When $k=l$ (middle panel), the incentive remains constant over time, consistent with the result reported in \cite{wang2025optimally}.

\section{Discussion and Conclusions}
This work addresses a social dilemma in which each agent is individually incentivised to defect for personal benefit rather than collective welfare. As a result, the population may converge to widespread defection, despite mutual cooperation yielding a higher collective payoff. The Prisoner's Dilemma provides a suitable framework for modelling this behaviour. To capture player interactions, we consider both single-layer and multilayer network structures, recognising that in real-world systems, individuals often participate in multiple communities with distinct interaction structures. In addition, the proposed model adopts continuous strategies rather than binary cooperate-or-defect decisions and is formulated on general rather than regular network topologies. This framework enables the design of decentralised, time-varying, and node-specific optimal incentives that depend on each player’s current cooperation level and structural position within the multilayer network.

We first present Lemma \ref{scalarequilibrium}, which establishes the equilibrium behaviour of the replicator dynamics under the Prisoner's Dilemma and Harmony game settings in a well-mixed population. Subsequently, Theorem \ref{optscalar} derives the candidate optimal control strategy for the case of free terminal time and fixed terminal state. The result shows that the incentive is dynamic and depends on feedback from the current strategy distribution within the population.

Based on the results of the well-mixed system, we next consider its implementation in a single-layer networked system. Following the same analytical approach, Lemma \ref{vectorequilibrium} establishes the stable boundary equilibria of the Prisoner's Dilemma and Harmony Game using Lyapunov stability theory. Theorem \ref{optinvertible} derives the candidate optimal control that minimises the incentive energy while satisfying the terminal state constraint. The results show that the incentive not only varies dynamically over time but also differs across nodes as a function of each player’s current cooperation level and the network topology. This extends previous findings on optimal control in regular lattice networks, where each node has similar neighbourhood structures and incentives are distributed more uniformly across nodes \cite{wang2022decentralized}. In contrast, the use of a general network and mixed strategies in our framework leads to heterogeneous neighbourhood structures and node-specific incentives.

The single-layer network framework also enables extension to multilayer networks. We further consider the implementation of optimal incentives in a multilayer setting, where game interactions and strategy diffusion occur over distinct network structures. Consistently, the stable all-cooperation and all-defection equilibria for the Prisoner's Dilemma and Harmony Game in the multilayer network are established in Lemma \ref{multilayerequilibrium}. Subsequently, Theorem \ref{optmultiinver} proves that the proposed optimal control strategy, derived using the Hamilton-Jacobi-Bellman method, yields an optimal incentive mechanism for promoting the spread of cooperation in the Prisoner's Dilemma.

The proposed optimal control strategy is consistent with previous analytical findings showing that full cooperation can be achieved under free terminal time and free terminal state settings \cite{wang2022decentralized}\cite{wang2023optimization}. Our simulations are also consistent with previous results showing that the incentive may remain constant when the payoff matrix is structured such that $k=l$ \cite{wang2025optimally}, meaning that the relative payoff difference between cooperation and defection is independent of the opponent’s strategy. However, this work extends the previous result by considering general network structures, continuous cooperation as dynamic variables rather than discrete strategies, and multilayer network interactions. While previous work considers reward and punishment as mutually exclusive control actions \cite{wang2022decentralized}, our framework allows both mechanisms to operate simultaneously across different nodes. As a result, some nodes may receive rewards, while others may receive penalties at the same time. The proposed multilayer optimal control framework also offers potential applications in various domains, such as socio-technical systems, where social and technical subsystems may have distinct but interconnected interaction structures \cite{Ropohl1999}\cite{zhu2025revisiting}.

This work also opens several directions for future research. The proposed framework and optimal control solution may be extended to stopping and switching systems and noisy dynamics, as the solution is formulated as a state-feedback solution within the Hamilton-Jacobi-Bellman framework, in which the optimal decision is determined by the current state, regardless of the preceding trajectory, including cases involving disruptions or disconnections from previous time steps. Future research may also explore the application of the proposed framework to other game settings, such as the Snowdrift and Stag Hunt Games.

\section*{Acknowledgement}
This work was supported by the Beasiswa Pendidikan Indonesia (BPI) Scholarship, PPAPT Kemdiktisaintek, and LPDP under the Government of Indonesia.




\bibliographystyle{unsrtnat}

\bibliography{citation}



\end{document}